\numberwithin{equation}{section}
\newtheorem{thm}{Theorem}[section]
\newtheorem{pro}[thm]{Proposition}
\newtheorem{cor}[thm]{Corollary}
\newtheorem{qu}[thm]{Question}
\theoremstyle{definition}
\newtheorem{dfn}[thm]{Definition}
\theoremstyle{remark}
\begin{document}

\title[Alexandroff type manifolds and homology manifolds]
{Alexandroff type manifolds and homology manifolds}

\author{V. Todorov}
\address{Department of Mathematics, UACG, $1$ H. Smirnenski blvd.,
$1046$ Sofia, Bulgaria} \email{vtt-fte@uacg.bg}

\author{V. Valov}
\address{Department of Computer Science and Mathematics,
Nipissing University, 100 College Drive, P.O. Box 5002, North Bay,
ON, P1B 8L7, Canada} \email{veskov@nipissingu.ca}

\thanks{The second author was partially supported by NSERC
Grant 261914-13.}

 \keywords{Cantor $n$-manifold, cohomological dimension, cohomology groups,
homology $n$-manifold, $K^n_G$-manifold, $V^n$-continuum}

\subjclass[2010]{Primary 55M10, 55M15; Secondary 54F45, 54C55}
\begin{abstract}
We introduce and investigate the notion of (strong) $K^n_G$-manifolds, where $G$ is an abelian group. One of the result related to that notion (Theorem 3.4) implies the following partial answer  to the Bing-Borsuk problem \cite{bb}, whether any partition of a homogeneous metric $ANR$-space $X$ of dimension $n$ is cyclic in dimension $n-1$: If $X$ is a homogeneous metric $ANR$-continuum with $\dim_GX=n$ and $H^{n}(X;G)\neq 0$, then $H^{n-1}(M;G)\neq 0$ for every set $M\subset X$, which cuts $X$ between two disjoint open subsets of $X$. Another implication of Theorem 3.4 (Corollary 3.6) provides an
analog of the classical result of Mazurkiewicz \cite{ma} that no region in $\mathbb R^n$ can be cut by a subset of dimension $\leq n-2$.
Concerning homology manifolds, it is shown that
 if $X$ is arcwise connected complete metric space which is either  a homology $n$-manifold over a group $G$ or a product of at least $n$  metric spaces, then $X$ is a Mazurkiewicz arc $n$-manifold. We also introduce a property which guarantees that $H_k(X,X\setminus x;G)=0$ for every $x\in X$ and $k\leq n-1$, where $X$ is a homogeneous locally compact metric $ANR$.
\end{abstract}
\maketitle\markboth{}{Generalized manifolds}





\section{Introduction}

In this paper we investigate some properties of generalized Cantor manifolds. Cantor  manifolds were introduced by Urysohn \cite{u} as a generalization of Euclidean manifolds. It appeared that Euclidean manifolds have a richer structure and that was a motivation for the study of further specifications of Cantor manifolds. Another interesting fact is that homogeneous metric $ANR$s have some common properties with generalized Cantor manifolds (see \cite{kktv}, \cite{kv}, \cite{kru93}), and the same is true for homology manifolds (see Corollary 4.2 below). This is not so surprising having in mind the Bing-Borsuk conjecture that every homogeneous separable locally compact metric $ANR$ of dimension $n$ is a homology $n$-manifold \cite[Remark, pp.106-107]{bb}.

Recall that a space $X$ is a
{\em Cantor $n$-manifold} if any partition of $X$ is of dimension $\geq n-1$ \cite{u} (a partition of $X$
is a closed set $P\subset X$ such that $X\setminus P$ is the union of two open nonempty disjoint sets).
In other words, $X$ cannot be the union
of two proper closed sets whose intersection is of covering
dimension $\leq n-2$. {\em Strong Cantor manifolds} is another specification of Cantor manifolds which was considered  by Had\v{z}iivanov \cite{h}.
Had\v{z}iivanov  and Todorov \cite{ht} introduced the class of {\em Mazurkiewicz $n$-manifolds}, which is a proper sub-class of the strong Cantor $n$-manifolds. This notion has its roots in the classical Mazurkiewicz theorem \cite{ma} that
no region $X$ in the Euclidean  $n$-space can be cut by a subset $M$ with $\dim M\leq n-2$ in following sense:  any two points from  $X\setminus M$ can be joined by a continuum $K\subset X\setminus M$.


But the strongest specification of Cantor manifolds is the notion of $V^n$-continua introduced by
Alexandroff \cite{ps}: a continuum $X$ is a {\em $V^n$-continuum}
if for every two closed disjoint subsets $X_0$, $X_1$ of $X$, both having non-empty interiors,
there exists an open cover
$\omega$ of $X$ such that there is no partition $P$ in $X$ between
$X_0$ and $X_1$ admitting an $\omega$-map into a space $Y$ with
$\dim Y\leq n-2$ ($f\colon P\to Y$ is said to be an $\omega$-map if there exists
an open cover $\gamma$ of $Y$ such that $f^{-1}(\gamma)$ refines $\omega$).

The above notions are related as follows:
 strong Cantor $n$-manifolds are Cantor $n$-manifolds, every $V^n$ -continuum is a Mazurkiewicz $n$-manifold and Mazurkiewicz $n$-manifolds are strong Cantor $n$-manifolds \cite{ht}. None of the above inclusions is reversible \cite{kktv}.

More
general concepts of the above notions were considered in \cite{kktv} and \cite{tv}. In particular, we are
going to use the following two, where
$\mathcal{C}$ is a class of topological spaces.

\begin{dfn}\label{dfn1}
A connected space $X$ is an {\em Alexandroff manifold with respect to $\mathcal C$}
(br., {\em Alexandroff $\mathcal C$-manifold}) if for every two closed, disjoint subsets $X_0$, $X_1$ of $X$, both having non-empty interiors,
there exists an open cover
$\omega$ of $X$ such that no partition $P$ in $X$ between
$X_0$ and $X_1$ admits an $\omega$-map onto a space $Y\in\mathcal C$.
\end{dfn}

\begin{dfn}\label{dfn2}
A semi-continuum $X$  is said to be a {\em Mazurkiewicz manifold with respect to
$\mathcal{C}$} (br., {\em Mazurkiewicz $\mathcal C$-manifold}) provided for every two closed disjoint
sets $X_0,X_1\subset X$ with non-empty interiors,
and every set $F=\bigcup_{i=0}^\infty F_i\subset X$ with each $F_i\in\mathcal{C}$ being proper closed subset of $X$,
there exists a continuum $K$ in $X\setminus F$ joining $X_0$ and $X_1$. If, in the above definition, $X_0$ and $X_1$ can be joined by an
arc in $X\setminus F$, $X$ is called a {\em Mazurkiewicz arc $\mathcal C$-manifold}.
\end{dfn}

It is still unknown if every compact Alexandroff $D^{n-2}_G$-manifold is a Mazurkiewicz $D^{n-2}_G$-manifold, where $D^{n-2}_G$ is the class of all spaces  whose cohomological dimension $\dim_G$ is $\leq n-2$. Searching for the answer of this question, we consider in Section 2 different types of connectedness between disjoint subsets of compacta. The notions of $K_G^n$-manifolds and strong $K_G^n$-manifolds, introduced in Section 2, play a crucial role in the paper. We also provide in Section 2 some examples of (strong) $K_G^n$-manifolds.

The main result in Section 3 is Theorem 3.4. A particular version of that theorem states that if $(X,F)$ is a strong $K_G^n$-manifold, where $X$ is a metric compactum,  and $M\subset X$ is a set with $H^{n-1}(M,M\cap F;G)=0$, then for any two disjoint nonempty open sets $P$ and $Q$ in $X$ there exists a continuum $K\subset X\setminus M$ joining $P$ and $Q$ provided $M\subset X\setminus (P\cup Q)$. The last requirement can be avoided if $\dim_GM\leq n-1$. One corollary  of this theorem providing a partial answer to the Bing-Borsuk question \cite{bb} was mentioned in the abstract. Another corollary is the following analog of the Mazurkiewicz theorem \cite{ma} cited above: If $M$ is a bounded subset of $\mathbb R^n$ with $\dim M\leq n-1$ and $H^{n-1}(M;\mathbb Z)=0$, then every pair of nonempty disjoint open sets  $P, Q\subset\mathbb R^n$  can be joined by a continuum in $\mathbb R^n\setminus M$.

The starting point for our considerations in Section 4 was the following result of Krupski \cite[Proposition 1.7]{kru93}: {\em Let $X$ be a locally compact locally connected
separable metric space such that for all $k<n$ and $x\in X$ the singular homology groups $H_k(X,X\setminus x)$ are trivial. Then every open connected subset of $U\subset X$ is a Cantor $n$-manifold}. We establish that, under the hypotheses of Krupski's result, every connected open subset of $X$ is a Mazurkiewicz arc manifold with respect to the class of all spaces whose covering dimension is $\leq n-2$ (Theorem 4.1). The same conclusion also holds for  arcwise connected open subsets of a complete metric space which is a product of $n$ metric spaces. We also introduce the $LS^n$-property and show that $LS^{n-2}$ implies $H_k(X,X\setminus x;G)=0$ for every $x\in X$ and $k\leq n-1$ (see Theorem 4.5), where $X$ is a homogeneous locally compact metric $ANR$. This improves a result of Mitchell \cite{mi}.

Everywhere in this paper reduced \v{C}ech cohomology groups are considered. If it is not mentioned otherwise, we suppose that the coefficients of the cohomology groups are from a fixed group $G$  and we omit the symbol for the group of coefficients.

Finally, let us raise the following questions (the second question was answered positively in \cite{ktv} provided $X$ is cyclic in dimension $n$):
\begin{qu}
Let $X$ be a compact Alexandroff manifold with respect to the class $D^{n-2}_G$. Is it true that $X$ is a Mazurkiewicz $D^{n-2}_G$-manifold?
What about if $(X,F)$ is a $K_G^n$-manifold or $X$ is a $V^n_G$-continuum?
\end{qu}

\begin{qu}
Is it true that any homogeneous $ANR$-continuum $X$ of dimension $n$ is an Alexandroff $D^{n-2}_{\mathbb Z}$-manifold?
\end{qu}
{\bf Acknowledgements:} The authors wish to thank the referee for his/her valuable remarks and suggestions which significantly improved the paper.

\section{Alexandroff types connectedness of spaces}

In this section all spaces are assumed to be at least paracompact and $G$ denotes an abelian group. For any space $X$ the cohomological dimension
$\dim_GX$ is the smallest integer $n$ such that $H^{n+1}(X,A;G)=0$ for all closed subsets $A\subset X$ (the reader is referred to \cite{dr} for basic facts on the cohomological dimension of compacta). If $\omega$ is an open cover of $X$ and
$Z\subset X$ is closed, we denote by $|\omega|$ and $|\omega_Z|$ the nerves of $\omega$ and the system $\omega_Z=\{U\cap Z: U\in\omega\}$. For any such $\omega$ and $Z\subset X$ let $p_{\omega_Z}^*\colon H^k(|\omega|,|\omega_Z|)\to H^k(X,Z)$, $k\geq 0$, be the projection between the $k$-th  cohomology groups, where $p_{\omega_Z}\colon (X,Z)\to (|\omega|,|\omega_Z|)$ is a map generated by a partition of unity subordinated to $\omega$ (we call such a map $p_{\omega_Z}$ to be natural). Further, $i_A\colon (A,B)\to (X,Z)$ denotes the embedding of the pair $(A,B)$ into the pair $(X,Z)$.

\begin{dfn} Let $P$ and $Q$ be disjoint nonempty open subsets of  a continuum $X$ and $F\subset X$ closed. We say that the pair $(X,F)$ is {\em $K_G^n$-connected between $P$ and $Q$} if there exist an open cover $\omega$ of $Y=X\setminus (P\cup Q)$ such that the following condition holds for every partition $C$ of $X$ between $P$ and $Q$: any natural map $p_{\omega_{C}}\colon (C,C\cap F)\to (|\omega_C|,|\omega_{C\cap F}|)$ generates a non-trivial homomorphism $$p_{\omega_{C}}^*:H^{n-1}(|\omega_C|,|\omega_{C\cap F}|)\to H^{n-1}(C,C\cap F).$$ If, in the above situation, there exists also $\mathrm{e}\in H^{n-1}(|\omega|,|\omega_{F\cap Y}|)$   such that $p_{\omega_{C}}^*(i^{\ast}_{\omega_{C}}(\mathrm{e}))\neq 0$  for every partition $C$ in $X$ between $P$ and $Q$, the pair $(X,F)$ is called {\em strongly $K^n_G$-connected between $P$ and $Q$}.
Further, $(X,F)$ is said to be a {\em $K^n_G$-manifold} (resp., {\em strong $K^n_G$-manifold}) if it is $K_G^n$-connected (resp., strongly $K^n_G$-connected) between any two nonempty open disjoint sets $P,Q\subset X$. If $F=\varnothing$, we will say that $X$ is a (strong) $K^n_G$-manifold.
\end{dfn}

Definition 2.1 is justified by the following result which was actually established by Kuzminov \cite{ku}:
\begin{thm}
Every compactum $X$ with $\dim_GX=n$ contains a pair $(Y,F)$ of closed sets such that $(Y,F)$ is a strong $K_G^n$-manifold.
\end{thm}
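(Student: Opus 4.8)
The plan is to realise $(Y,F)$ as a ``minimal carrier'' of a cohomology class witnessing $\dim_GX=n$, and then to extract the strong $K_G^n$-manifold property from that minimality by a relative Mayer--Vietoris argument. Since $\dim_GX=n$, fix a closed $A\subset X$ and a class $0\neq\alpha\in H^n(X,A;G)$; recall also that $H^{n+1}(Z,B;G)=0$ for all closed pairs $B\subset Z\subset X$ (monotonicity of $\dim_G$ on closed subsets), a fact used implicitly below. Consider the family of closed sets $Y\subset X$ for which $\alpha|_{(Y,\,A\cap Y)}\neq 0$ in $H^n(Y,A\cap Y;G)$, ordered by inclusion. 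It contains $X$, and a decreasing chain $\{Y_\lambda\}$ has the lower bound $Y_\infty=\bigcap_\lambda Y_\lambda$: by continuity of \v Cech cohomology on the inverse system of compact pairs $(Y_\lambda,A\cap Y_\lambda)$ one has $H^n(Y_\infty,A\cap Y_\infty;G)=\varinjlim_\lambda H^n(Y_\lambda,A\cap Y_\lambda;G)$, and the compatible family $\{\alpha|_{(Y_\lambda,A\cap Y_\lambda)}\}$, having no zero member, represents a nonzero element of this limit. By Zorn's lemma pick a minimal such $Y$ and set $F:=A\cap Y$, $\beta:=\alpha|_{(Y,F)}\neq 0$. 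Because restriction to a closed subpair factors through $(Y,F)$, minimality yields the key property
\[
(\star)\qquad \beta|_{(Z,\,Z\cap F)}=0 \ \text{ in } \ H^n(Z,Z\cap F;G)\quad\text{for every proper closed }Z\subsetneq Y .
\]
Note $Y$ is connected: a clopen decomposition of $Y$ would split $\beta$ and contradict minimality.

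Now fix disjoint open $P,Q\subset Y$; we may assume $P,Q\neq\emptyset$ (otherwise the condition is vacuous), hence $P\cup Q\subsetneq Y$ by connectedness. Put $Y':=Y\setminus(P\cup Q)$ and $F':=F\cap Y'$. Let $C$ be any partition in $Y$ between $P$ and $Q$, say $Y\setminus C=U\sqcup V$ with $P\subset U$, $Q\subset V$, $U,V$ open. A short point-set computation gives $\overline U\cap V=\overline V\cap U=\emptyset$, whence $(Y\setminus V)\cup(Y\setminus U)=Y$, $(Y\setminus V)\cap(Y\setminus U)=C$, and $Y\setminus V$, $Y\setminus U$ are \emph{proper} closed subsets of $Y$ (they miss $Q$, resp.\ $P$). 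Feeding this into the relative Mayer--Vietoris sequence of the closed excisive triad $(Y;\,Y\setminus V,\,Y\setminus U)$ with subspace $F$,
\[
\cdots\to H^{n-1}(C,C\cap F;G)\xrightarrow{\ \delta_C\ }H^{n}(Y,F;G)\xrightarrow{\ \rho_C\ }H^{n}(Y\setminus V,\,F\cap(Y\setminus V);G)\oplus H^{n}(Y\setminus U,\,F\cap(Y\setminus U);G)\to\cdots ,
\]
property $(\star)$ gives $\rho_C(\beta)=0$, so $\beta\in\operatorname{im}\delta_C$ for \emph{every} partition $C$. Apply this to the ``largest'' partition $C=Y'$ (for which $U=P$, $V=Q$) and choose $\mathrm{e}_{0}\in H^{n-1}(Y',F';G)$ with $\delta_{Y'}(\mathrm{e}_{0})=\beta$.

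It remains to see that this one class $\mathrm{e}_{0}$ detects all partitions simultaneously. For an arbitrary partition $C$ as above, the inclusions $Y\setminus V\subset Y\setminus Q$ and $Y\setminus U\subset Y\setminus P$ (valid since $Q\subset V$, $P\subset U$) form a map of triads $(Y;Y\setminus V,Y\setminus U)\to(Y;Y\setminus Q,Y\setminus P)$ which is the identity on $(Y,F)$ and the inclusion $C\hookrightarrow Y'$ on the intersections; naturality of the Mayer--Vietoris connecting homomorphism gives $\delta_C(\mathrm{e}_{0}|_{(C,C\cap F)})=\delta_{Y'}(\mathrm{e}_{0})=\beta\neq 0$, so $\mathrm{e}_{0}|_{(C,C\cap F)}\neq 0$ for every partition $C$ between $P$ and $Q$. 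Finally, translate to the nerve level: writing $H^{n-1}(Y',F';G)=\varinjlim_\omega H^{n-1}(|\omega|,|\omega_{F'}|;G)$ over open covers $\omega$ of $Y'$, pick $\omega$ and $\mathrm{e}\in H^{n-1}(|\omega|,|\omega_{F'}|;G)$ with $p^{*}_{\omega}(\mathrm{e})=\mathrm{e}_{0}$; since for any closed $C\subset Y'$ the composite $i_{\omega_C}\circ p_{\omega_C}\colon(C,C\cap F)\to(|\omega|,|\omega_{F'}|)$ is homotopic to the restriction of $p_\omega$ to $C$, we obtain $p^{*}_{\omega_C}(i^{*}_{\omega_C}(\mathrm{e}))=\mathrm{e}_{0}|_{(C,C\cap F)}\neq 0$. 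Hence $(Y,F)$ is strongly $K_G^n$-connected between $P$ and $Q$; as $P,Q$ were arbitrary, $(Y,F)$ is a strong $K_G^n$-manifold.

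The main obstacle is twofold. First, the Brouwer-type reduction in the first paragraph needs care because $\dim_G$ is \emph{not} continuous under decreasing intersections; the argument survives only because we follow the fixed class $\alpha$, whose restrictions are well behaved under the relevant direct limit. Second --- and this is where ``strong'' (as opposed to plain) $K_G^n$-connectedness is bought --- one must notice that the single class $\mathrm{e}_{0}$, read off from the maximal partition $Y'$, works for all partitions at once by naturality of Mayer--Vietoris; without this observation one obtains only the weaker $K_G^n$-manifold property. The remaining ingredients (the point-set identities relating $U$, $V$, $C$; the verification of $(\star)$; the comparison of nerve maps with $p_\omega$) are routine.
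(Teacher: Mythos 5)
Your proof is correct and follows essentially the route the paper relies on: the authors cite Kuzminov for Theorem 2.2, but the core of your argument --- producing a class $\beta\in H^n(Y,F;G)$ that is non-zero yet restricts to zero on every proper closed subpair, feeding it into the relative Mayer--Vietoris sequence of the closed triad determined by a partition to obtain $\mathrm{e}_0$ with $\delta_{Y'}(\mathrm{e}_0)=\beta$, and using naturality of $\delta$ plus the direct-limit description of \v{C}ech cohomology to realize $\mathrm{e}_0$ at the nerve level --- is exactly the argument the authors reproduce (following Kuzminov) in their proof of Proposition 2.13. Your opening Brouwer-reduction step (Zorn's lemma together with continuity of \v{C}ech cohomology on decreasing intersections of compact pairs, applied to the fixed class $\alpha$) is the standard way to manufacture the minimal carrier $(Y,F)$, so the proposal contains no gap and no genuinely different idea.
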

Similarly, the definition of Alexandroff $\mathcal C$-manifold yields the following one:

\begin{dfn}
Let $P$ and $Q$ be open nonempty subsets of  a continuum $X$ with $\overline{P}\cap \overline{Q}=\varnothing$. We say that $X$ is $\mathcal C$-connected
in the sense of Alexandroff between $P$ and $Q$ (shortly, {\em $A(\mathcal C)$-connected between $P$ and $Q$}) if there exists an open cover $\omega$ of
$X\setminus (P\cup Q)$ such that no partition of $X$ between
$P$ and $Q$ admits an $\omega$-map onto a space from the class $\mathcal C$.
\end{dfn}

\begin{pro}
Let $P$ and $Q$ be nonempty open subsets of  a continuum $X$ with $\overline{P}\cap \overline{Q}=\varnothing$ and $F\subset X$ closed. If the pair $(X,F)$ is
$K_G^n$-connected  between $P$ and $Q$, then $X$ is $A(D^{n-2}_G)$-connected between $P$ and $Q$, where $D^{n-2}_G$ is the class of spaces of dimension $\dim_G\leq n-2$.
\end{pro}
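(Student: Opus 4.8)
The plan is to use, as the witnessing cover of $X\setminus(P\cup Q)$ in the Alexandroff condition, the \emph{same} open cover $\omega$ of $Y=X\setminus(P\cup Q)$ that witnesses the $K^n_G$-connectedness of $(X,F)$ between $P$ and $Q$, and to argue by contradiction. So suppose some partition $C$ of $X$ between $P$ and $Q$ admits an $\omega$-map $f\colon C\to Z$ onto a space $Z$ with $\dim_GZ\le n-2$. Since $C$ is a partition between $P$ and $Q$ we have $C\subset Y$, so $\omega_C=\{U\cap C:U\in\omega\}$ makes sense; by definition of an $\omega$-map there is an open cover $\gamma$ of $Z$ (which we may take finite, as $Z=f(C)$ is compact) with $f^{-1}(\gamma)=\{f^{-1}(V):V\in\gamma\}$ refining $\omega_C$. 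Put $B=f(C\cap F)$; since $C\cap F$ is compact and $Z$ is Hausdorff, $B$ is closed in $Z$, and therefore $H^{n-1}(Z,B;G)=0$ because $\dim_GZ\le n-2$.

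Next I would factor a natural map $p_{\omega_C}$ through $f$. Fix a partition of unity $\{\psi_V\}_{V\in\gamma}$ on $Z$ subordinate to $\gamma$ and a function $\sigma\colon f^{-1}(\gamma)\to\omega_C$ with $f^{-1}(V)\subset\sigma(f^{-1}(V))$ for all $V\in\gamma$. Define $g\colon Z\to|\omega_C|$ by letting $g(z)$ be the point whose barycentric coordinate at a vertex $W\in\omega_C$ equals $\sum_{\sigma(f^{-1}(V))=W}\psi_V(z)$. The crucial observation is that $g$ is well defined: if $\psi_{V_0}(z),\dots,\psi_{V_k}(z)>0$ then $z\in V_0\cap\dots\cap V_k$, and since $f$ is \emph{onto} there is $x\in C$ with $f(x)=z$, whence $x\in f^{-1}(V_0)\cap\dots\cap f^{-1}(V_k)\subset\sigma(f^{-1}(V_0))\cap\dots\cap\sigma(f^{-1}(V_k))$, so the vertices $\sigma(f^{-1}(V_i))$ indeed span a simplex of $|\omega_C|$. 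The same computation with $z\in B$ (so that the witness $x$ can be chosen in $C\cap F$) shows $g(B)\subset|\omega_{C\cap F}|$, so $g$ is a map of pairs $(Z,B)\to(|\omega_C|,|\omega_{C\cap F}|)$. Moreover $g\circ f$ is precisely the natural map $p_{\omega_C}\colon(C,C\cap F)\to(|\omega_C|,|\omega_{C\cap F}|)$ generated by the partition of unity $\{\chi_W\}_{W\in\omega_C}$ on $C$ with $\chi_W=\sum_{\sigma(f^{-1}(V))=W}\psi_V\circ f$, which is subordinate to $\omega_C$ since $f^{-1}(V)\subset W$ whenever $\sigma(f^{-1}(V))=W$.

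To finish, observe that $p_{\omega_C}^{*}=(g\circ f)^{*}=f^{*}\circ g^{*}$ factors as
\[
H^{n-1}(|\omega_C|,|\omega_{C\cap F}|;G)\xrightarrow{g^{*}}H^{n-1}(Z,B;G)\xrightarrow{f^{*}}H^{n-1}(C,C\cap F;G),
\]
and the middle group is trivial; hence $p_{\omega_C}^{*}=0$. This contradicts the $K^n_G$-connectedness of $(X,F)$ between $P$ and $Q$, which forces every natural map $p_{\omega_C}$ to induce a nontrivial homomorphism in dimension $n-1$ for each partition $C$ of $X$ between $P$ and $Q$. (If one prefers, one first notes that any two natural maps $(C,C\cap F)\to(|\omega_C|,|\omega_{C\cap F}|)$ are homotopic as maps of pairs, via the straight-line homotopy between their defining partitions of unity, which stays inside $|\omega_C|$ and inside $|\omega_{C\cap F}|$ over $C\cap F$; so the particular choices of $\sigma$ and $\{\psi_V\}$ are immaterial.) Thus no partition of $X$ between $P$ and $Q$ admits an $\omega$-map onto a member of $D^{n-2}_G$, i.e.\ $X$ is $A(D^{n-2}_G)$-connected between $P$ and $Q$.

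I expect the only genuinely delicate point to be the well-definedness of $g$, and it is exactly there that surjectivity of the $\omega$-map $f$ enters: without it the nerve of $f^{-1}(\gamma)$ could be a proper subcomplex of the nerve of $\gamma$, and the factorization of $p_{\omega_C}$ through $Z$ — hence through a cohomology group annihilated by $\dim_GZ\le n-2$ — would collapse. The remaining steps are the standard bookkeeping for natural maps into nerves and nerves of refinements.
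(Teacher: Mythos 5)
Your argument is correct and is essentially the paper's own proof: the paper also reuses the witnessing cover $\omega$ from Definition 2.1 and factors $p_{\omega_C}^{*}$ through $H^{n-1}(T,g(C\cap F);G)=0$, obtaining $p_{\omega_C}^{*}=p_{\nu}^{*}\circ\varphi_{\nu}^{*}$ with $p_{\nu}^{*}=g^{*}\circ p_{\tau}^{*}\circ(g^{\tau}_{\nu})^{*}$, where $g^{\tau}_{\nu}$ is the simplicial homeomorphism $|\tau|\to|g^{-1}(\tau)|$ whose existence is exactly where surjectivity of the $\omega$-map enters, just as in your well-definedness check. Your explicitly constructed map $g$ is precisely the composite $\varphi_{\nu}\circ g^{\tau}_{\nu}\circ p_{\tau}$ of the paper's three maps, so the two write-ups differ only in bookkeeping.
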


\begin{proof}
According to Definition 2.1, we can find an open cover $\omega$ of $Y=X\setminus (P\cup Q)$ satisfying the following condition:
for every partition $C$ in $X$ between $P$ and $Q$ and a natural map $p_{\omega_{C}}\colon (C,C\cap F)\to (|\omega_C|,|\omega_{C\cap F}|)$ there exists an element $\mathrm{e_C}\in H^{n-1}(|\omega_C|,|\omega_{C\cap F}|)$ with $p_{\omega_{C}}^*(\mathrm{e_C})\neq 0$.

Suppose there exists a partition $C$ of $X$ between $P$ and $Q$ admitting an $\omega$-map
$g\colon C\to T$ onto a compactum $T$ with $\dim_GT\leq n-2$. Thus, we can find a finite open cover $\tau$
of $T$  such that
$\nu=g^{-1}(\tau)$ is refining $\omega$. Let $p_\nu\colon (C,C\cap F)\to (|\nu|,|\nu_{C\cap F}|)$ and
$p_\tau\colon (T,g(C\cap F))\to (|\tau|, |\tau_{g(C\cap F)}|)$ be natural maps.
Obviously, the function $V\in\tau\rightarrow g^{-1}(V)\in\nu$ provides
a simplicial homeomorphism $g^{\tau}_\nu\colon (|\tau|, |\tau_{g(C\cap F)}|)\to (|\nu|,|\nu_{C\cap F}|)$. Then the maps $p_\nu$ and
$g_\tau=g^{\tau}_\nu\circ p_\tau\circ g$ are homotopic. Hence, $$p_{\nu}^*=g^*\circ p_{\tau}^*\circ (g^{\tau}_\nu)^*.\leqno{(1)}$$
Because $H^{n-1}(T, g(C\cap F))=0$ (recall that $\dim_GT\leq n-2$), the homomorphism $g^*\colon H^{n-1}(T,g(C\cap F))\to H^{n-1}(C,C\cap F)$
is trivial. Then, by (1), so is the homomorphism $p_{\nu}^*\colon H^{n-1}(|\nu|,|\nu_{C\cap F}|)\to H^{n-1}(C,C\cap F)$.
On the other hand, since $\nu$
refines $\omega$, we can find a map $\varphi_\nu\colon (|\nu|,|\nu_{C\cap F}|)\to (|\omega_C|,|\omega_{C\cap F}|)$ such that $p _{\omega_C}$ and $\varphi_\nu\circ p_{\nu}$
are homotopic. Therefore, $p_{\omega_C}^*=p_{\nu}^*\circ\varphi_\nu^*$.
Since
$p_{\omega_C}^*(\mathrm{e_C})\neq 0$ for some $\mathrm{e_C}\in H^{n-1}(|\omega_C|,|\omega_{C\cap F}|)$,
$p_{\nu}^*(\varphi_\nu^*(\mathrm{e}_C))\neq 0$, which contradicts the triviality of $p_{\nu}^*$.

Therefore, $X$ is $A(D^{n-2}_G)$-connected between $P$ and $Q$.
\end{proof}

\begin{cor}
A compactum $X$ is an Alexandroff $D^{n-2}_G$-manifold provided $(X,F)$ is a $K_G^n$-manifold for some closed set $F\subset X$.
\end{cor}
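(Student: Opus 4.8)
The plan is to obtain the corollary as a straightforward globalization of Proposition 2.6; the only point needing care is the passage from the pair-relative notion of $A(D^{n-2}_G)$-connectedness in Definition 2.4 (a cover of the complement of the two open sets) to the notion of an Alexandroff $D^{n-2}_G$-manifold in Definition 1.2 (a cover of all of $X$, with closed witness sets having non-empty interiors).

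First I would fix two disjoint closed subsets $X_0,X_1\subseteq X$, each with non-empty interior, and set $P=\inte X_0$ and $Q=\inte X_1$. These are non-empty open sets, and since $X_0,X_1$ are closed and disjoint we have $\overline P\subseteq X_0$ and $\overline Q\subseteq X_1$, hence $\overline P\cap\overline Q=\varnothing$; thus Proposition 2.6 applies to the pair $(P,Q)$. By hypothesis $(X,F)$ is a $K_G^n$-manifold, so it is $K_G^n$-connected between $P$ and $Q$, and Proposition 2.6 then yields an open cover $\omega$ of $Y:=X\setminus(P\cup Q)$ such that no partition of $X$ between $P$ and $Q$ admits an $\omega$-map onto a space from $D^{n-2}_G$.

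Next I would extend $\omega$ to an open cover of the whole space: for each $W\in\omega$ pick an open $\widetilde W\subseteq X$ with $\widetilde W\cap Y=W$, and put $\omega'=\{\widetilde W:W\in\omega\}\cup\{P,Q\}$, which covers $X$ because $\bigcup\omega=Y$. I claim $\omega'$ shows that $X$ is an Alexandroff $D^{n-2}_G$-manifold with respect to $X_0,X_1$. Two elementary observations drive this: (i) any partition $C$ of $X$ between $X_0$ and $X_1$ is also a partition between $P$ and $Q$, since the same pair of complementary open sets serves both; and (ii) such a $C$ is disjoint from $X_0\cup X_1\supseteq P\cup Q$, so $C\subseteq Y$, the members $P,Q$ of $\omega'$ miss $C$ entirely, and $\widetilde W\cap C=W\cap C$ for every $W\in\omega$. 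Hence if $g\colon C\to T$ were an $\omega'$-map onto some $T\in D^{n-2}_G$, then the open cover of $T$ witnessing this also witnesses that $g$ is an $\omega$-map (each non-empty preimage of a member of that cover lies in some $\widetilde W$, and being contained in $C\subseteq Y$ it then lies in $W=\widetilde W\cap Y$), contradicting the choice of $\omega$. As $X_0,X_1$ were arbitrary, $X$ is an Alexandroff $D^{n-2}_G$-manifold.

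There is essentially no real obstacle: the corollary is little more than a restatement of Proposition 2.6 combined with the reduction of the Alexandroff-manifold definition to its pair-relative form. The only step deserving attention is the bookkeeping in the last paragraph — verifying that the cover extension producing $\omega'$ from $\omega$ is legitimate, and that an $\omega'$-map defined on a partition $C\subseteq Y$ separating $X_0$ and $X_1$ is genuinely an $\omega$-map in the sense used in Proposition 2.6 — together with the (trivial) check that $\inte X_0$ and $\inte X_1$ are non-empty and have disjoint closures.
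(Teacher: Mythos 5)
Your argument is correct and is exactly the route the paper intends: Corollary 2.5 is stated as an immediate consequence of Proposition 2.4 (which you cite as ``Proposition 2.6''), applied to $P=\inte X_0$ and $Q=\inte X_1$, and your extra paragraph merely spells out the harmless bookkeeping of extending the cover of $X\setminus(P\cup Q)$ to a cover of $X$ and checking that an $\omega'$-map on a partition $C\subseteq X\setminus(P\cup Q)$ is an $\omega$-map. The only blemish is that your internal cross-references are shifted relative to the paper's numbering (Definition 1.1, Definition 2.3, Proposition 2.4), but the content is the right one.
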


\begin{dfn}
Let $X$, $P$, $Q$ and $F$ be as in Definition 2.1. We say that the pair $(X,F)$ is {\em $V^n_G$-connected between $P$ and $Q$} if there exists an open cover $\omega$ of $X\setminus (P\cup Q)$ such that $g^*\colon H^{n-1}(T,g(C\cap F))\to H^{n-1}(C,C\cap F)$ is a non-trivial homomorphism for any partition $C$ of $X$ between $P$ and $Q$ and any surjective $\omega$-map $g\colon C\to T$. The pair $(X,F)$ is called a {\em relative $V^n_G$-continuum} provided $(X,F)$ is $V^n_G$-connected between any two open sets $P,Q\subset X$ with disjoint closures.
\end{dfn}

When $F$ is the empty set, the above definition provides $V^n_G$-continua introduced by Stefanov \cite{ss}.
The proof of Proposition 2.4 yields the following stronger conclusion:
\begin{cor}
If a pair $(X,F)$ is $K^n_G$-connected between two open sets $P,Q\subset X$ with disjoint closures, then $(X,F)$ is $V^n_G$-connected between $P$ and $Q$. In particular, every $K^n_G$-manifold $(X,F)$ is a relative $V^n_G$-continuum.
\end{cor}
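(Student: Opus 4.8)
The plan is to recycle the computation inside the proof of Proposition 2.4, observing that it in fact produces more than was needed there. Fix an open cover $\omega$ of $Y=X\setminus(P\cup Q)$ witnessing that $(X,F)$ is $K^n_G$-connected between $P$ and $Q$; I claim the \emph{same} $\omega$ witnesses $V^n_G$-connectedness. So let $C$ be any partition of $X$ between $P$ and $Q$ and $g\colon C\to T$ any surjective $\omega$-map. Since $C$ is closed in the compactum $X$, both $C$ and $T=g(C)$ are compacta, so I may choose a \emph{finite} open cover $\tau$ of $T$ with $\nu=g^{-1}(\tau)$ refining $\omega$, and I note that $g(C\cap F)$ is closed in $T$.

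Next I build, exactly as in Proposition 2.4, natural maps $p_\nu\colon(C,C\cap F)\to(|\nu|,|\nu_{C\cap F}|)$ and $p_\tau\colon(T,g(C\cap F))\to(|\tau|,|\tau_{g(C\cap F)}|)$, the simplicial homeomorphism $g^\tau_\nu\colon(|\tau|,|\tau_{g(C\cap F)}|)\to(|\nu|,|\nu_{C\cap F}|)$ induced by $V\mapsto g^{-1}(V)$, and a map $\varphi_\nu\colon(|\nu|,|\nu_{C\cap F}|)\to(|\omega_C|,|\omega_{C\cap F}|)$ coming from the fact that $\nu$ refines $\omega$. The homotopies $p_\nu\simeq g^\tau_\nu\circ p_\tau\circ g$ and $p_{\omega_C}\simeq\varphi_\nu\circ p_\nu$ established there yield the two factorizations
\[
p_\nu^*=g^*\circ p_\tau^*\circ(g^\tau_\nu)^*,\qquad p_{\omega_C}^*=p_\nu^*\circ\varphi_\nu^*.
\]
Now $K^n_G$-connectedness says $p_{\omega_C}^*$ is nontrivial; the second factorization then forces $p_\nu^*$ to be nontrivial, and the first one forces $g^*\colon H^{n-1}(T,g(C\cap F))\to H^{n-1}(C,C\cap F)$ to be nontrivial. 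Since $C$ and $g$ were arbitrary, $(X,F)$ is $V^n_G$-connected between $P$ and $Q$. The ``in particular'' clause is then immediate: a $K^n_G$-manifold is, by definition, $K^n_G$-connected between \emph{any} two disjoint open sets, hence in particular between any two open sets with disjoint closures, so by the first part it is $V^n_G$-connected between every such pair, i.e. a relative $V^n_G$-continuum.

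There is essentially no obstacle, since all the homotopy-theoretic work was already carried out in Proposition 2.4; the only points requiring a moment's care are tracking compactness (so that $\tau$ can be taken finite and $g(C\cap F)$ is closed, making $p_\tau$ a legitimate map of pairs whose nerve pair matches $(|\nu|,|\nu_{C\cap F}|)$ under $g^\tau_\nu$) and propagating nontriviality in the correct direction — a composite of homomorphisms can be nonzero only if its last-applied (left-most) factor is, which is precisely the chain $p_{\omega_C}^*\neq 0\Rightarrow p_\nu^*\neq 0\Rightarrow g^*\neq 0$ used above.
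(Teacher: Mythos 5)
Your argument is correct and is exactly what the paper intends: it states Corollary 2.7 with the remark that ``the proof of Proposition 2.4 yields the following stronger conclusion,'' i.e.\ one reuses the same cover $\omega$ and the two factorizations $p_\nu^*=g^*\circ p_\tau^*\circ(g^\tau_\nu)^*$ and $p_{\omega_C}^*=p_\nu^*\circ\varphi_\nu^*$, now read contrapositively to propagate nontriviality from $p_{\omega_C}^*$ to $g^*$. Your bookkeeping of compactness and of the direction in which nontriviality passes through a composite is accurate, so nothing further is needed.
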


There is an interesting analogy between
$V^n_G$-continua and relative $V^n_G$-continua. It follows from Theorem 2.2 and Corollary 2.7 that every compactum $X$ with $\dim_GX=n$ contains a relative $V^n_G$-continuum. On the other hand, any compactum $X$ with $H^n(X)\neq 0$ contains a $V^n_G$-continuum (see \cite{ss} for finite-dimensional metric $X$, and \cite{vv} for any compact $X$).

We are going to provide more examples of $K_G^n$-manifolds and strong $K_G^n$-manifolds. A closed non-empty set $A\subset X$ is said to a {\em a cohomological carrier}  of a non-zero element $\alpha\in H^n(X)$ if $i^*_A(\alpha)\neq 0$ and $i^*_B(\alpha)=0$ for every proper closed subset $B\subset A$, where $i_A$ denotes the inclusion map $A\hookrightarrow X$.

Next proposition was established by the second author using different terminology.
\begin{pro}\cite[Proposition 2.5]{vv}
Every cohomological carrier of a non-zero element of $H^n(X)$ is a strong $K_G^n$-manifold.
\end{pro}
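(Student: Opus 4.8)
The plan is to show that the carrier $A$ of $\alpha\in\check{H}^n(X)$, viewed as the pair $(A,\varnothing)$, is a strong $K^n_G$-manifold. Throughout I would work with $\beta:=i_A^*(\alpha)\in\check{H}^n(A)$; since $i_B^*=(i^A_B)^*\circ i^*_A$ for the inclusions $i^A_B\colon B\hookrightarrow A$, the definition of a cohomological carrier says precisely that $\beta\neq 0$ while $\beta$ restricts to $0$ on every proper closed subset of $A$. Fix nonempty disjoint open sets $P,Q\subset A$ and set $Y=A\setminus(P\cup Q)$. (With $F=\varnothing$, all relative cohomology groups appearing in Definition~2.1 reduce to absolute ones, which I use tacitly below.) As the complement of a nonempty open set in the compactum $A$, the set $Y$ is a proper closed, hence compact, subset of $A$, and so are $A\setminus P$ and $A\setminus Q$.

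The first step is a reduction: it suffices to produce one class $\eta\in\check{H}^{n-1}(Y)$ whose restriction $i_C^*(\eta)\in\check{H}^{n-1}(C)$ is non-zero for \emph{every} partition $C$ of $A$ between $P$ and $Q$. Indeed, since $Y$ is compact, $\check{H}^{n-1}(Y)$ is the direct limit of the groups $H^{n-1}(|\omega|)$ over finite open covers $\omega$ of $Y$, so such an $\eta$ has the form $\eta=p_\omega^*(\mathrm e)$ for some cover $\omega$ and some $\mathrm e\in H^{n-1}(|\omega|)$, where $p_\omega\colon Y\to|\omega|$ is a natural map. For any partition $C$ between $P$ and $Q$ one has $C\subset Y$, and restricting the partition of unity defining $p_\omega$ yields a natural map $p_{\omega_C}\colon C\to|\omega_C|$ with $i_{\omega_C}\circ p_{\omega_C}=p_\omega|_C$; hence $p_{\omega_C}^*\big(i_{\omega_C}^*(\mathrm e)\big)=i_C^*\big(p_\omega^*(\mathrm e)\big)=i_C^*(\eta)\neq 0$, and since any two natural maps into a nerve are homotopic this holds for every admissible $p_{\omega_C}$ (the weaker non-triviality of $p^*_{\omega_C}$ being an immediate consequence). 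This is exactly what Definition~2.1 demands for strong $K^n_G$-connectedness of $(A,\varnothing)$ between $P$ and $Q$.

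To construct $\eta$ I would invoke the Mayer--Vietoris sequence of \v{C}ech cohomology for the closed cover $A=(A\setminus P)\cup(A\setminus Q)$, whose intersection is $Y$:
\[
\check{H}^{n-1}(Y)\xrightarrow{\ \delta\ }\check{H}^n(A)\longrightarrow\check{H}^n(A\setminus P)\oplus\check{H}^n(A\setminus Q).
\]
Because $A\setminus P$ and $A\setminus Q$ are proper closed subsets of $A$, the carrier property gives $\beta|_{A\setminus P}=\beta|_{A\setminus Q}=0$, so $\beta$ lies in the kernel of the second arrow, which by exactness is the image of $\delta$; pick $\eta$ with $\delta(\eta)=\beta$. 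To verify this $\eta$, let $C$ be a partition between $P$ and $Q$, say $A\setminus C=U\sqcup V$ with $U,V$ open disjoint, $P\subset U$, $Q\subset V$. Then $F_1:=A\setminus V$ and $F_2:=A\setminus U$ are closed, cover $A$, intersect in $C$, and satisfy $F_1\subset A\setminus Q$ and $F_2\subset A\setminus P$; thus $\mathrm{id}_A$ is a morphism of closed triads $(A;F_1,F_2)\to(A;A\setminus Q,A\setminus P)$ that on intersections becomes the inclusion $C\hookrightarrow Y$. By naturality of Mayer--Vietoris, $\delta'\circ i_C^*=\delta$, where $\delta'\colon\check{H}^{n-1}(C)\to\check{H}^n(A)$ is the connecting homomorphism of $\{F_1,F_2\}$; hence $\delta'(i_C^*(\eta))=\beta\neq 0$, so $i_C^*(\eta)\neq 0$, as required.

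The conceptual content lies entirely in the Mayer--Vietoris step: the carrier hypothesis supplies precisely the two restriction-vanishings that push $\beta$ into the image of the connecting homomorphism, and naturality then transports non-triviality to every partition at once, with no case analysis on $C$. I expect the only real care --- not a genuine obstacle --- to be in the reduction step, i.e.\ matching the cover-and-nerve language of Definition~2.1 (a single $\omega$, a single $\mathrm e\in H^{n-1}(|\omega|)$) with the direct-limit description of $\check{H}^{n-1}(Y)$ and checking that the restriction to $C$ of a canonical map $Y\to|\omega|$ is again a canonical map $C\to|\omega_C|$, inducing a well-defined homomorphism in cohomology. One should also note that the degenerate situations (one of $P,Q$ empty, or no partition between them existing) make the required condition vacuous or trivially satisfiable.
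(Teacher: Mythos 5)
Your proof is correct and follows essentially the same route the paper uses: the paper cites Proposition 2.8 to \cite{vv} without proof, but its own proof of the analogous Proposition 2.13 is exactly your argument (push the class $\beta$, which dies on all proper closed subsets, into the image of the Mayer--Vietoris connecting homomorphism for the closed cover $\{A\setminus P, A\setminus Q\}$, lift the resulting $\eta$ to a nerve class $\mathrm{e}$ by continuity of \v{C}ech cohomology, and use naturality of Mayer--Vietoris over the finer triad determined by a partition $C$ to conclude $i_C^*(\eta)\neq 0$). Your reduction step and the treatment of the degenerate cases are also handled as the paper does, so nothing is missing.
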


Following Yokoi \cite{yo}, a compactum $X$ is called an {\em $(n,G)$-bubble} if $H^n(X)\neq 0$ and $H^n(A)=0$ for every closed proper set $A\subset X$. This is a reformulation of the notion of an {\em $n$-bubble} introduced by Kuperberg \cite{kup} and Choi \cite{ch}, see also Karimov-Repov\v{s} \cite{kr} for the stronger notion of an {\em $H^n$-bubble.} A compactum  $X$ is said to be a {\em generalized $(n,G)$-bubble} \cite{ktv}  if there exists a surjective map $f\colon X\to Y$ such that the homomorphism $f^*\colon H^n(Y)\to H^n(X)$ is nontrivial, but $f_A^*(H^n(Y))=0$ for any proper closed subset $A$ of $X$, where $f_A$ is the restriction of $f$ over $A$.

Proposition 2.9 below was actually established in \cite[Theorem 2.3, Claim 1]{ktv}.
\begin{pro}
Any generalized $(n,G)$-bubble is a strong $K_G^n$-mani-fold.
\end{pro}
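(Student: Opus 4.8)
The plan is to reduce the statement to Proposition 2.8. Let $X$ be a generalized $(n,G)$-bubble, witnessed by a surjection $f\colon X\to Z$ with $f^{*}\colon\check{H}^{n}(Z)\to\check{H}^{n}(X)$ nontrivial and $f_{A}^{*}(\check{H}^{n}(Z))=0$ for every proper closed $A\subsetneq X$ (I write $Z$ for the target of $f$ to avoid a clash with the sets $P,Q,Y$ used below), and choose $\alpha\in\check{H}^{n}(Z)$ with $\beta:=f^{*}(\alpha)\neq 0$. The observation is that the whole space $X$ is a cohomological carrier of $\beta\in\check{H}^{n}(X)$: indeed $i_{X}^{*}(\beta)=\beta\neq 0$, while for every proper closed $A\subsetneq X$ the identity $i_{A}^{*}(\beta)=(f\circ i_{A})^{*}(\alpha)=f_{A}^{*}(\alpha)$ together with $f_{A}^{*}(\check{H}^{n}(Z))=0$ gives $i_{A}^{*}(\beta)=0$. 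Hence Proposition 2.8 applies to the carrier $X$ and shows that $(X,\varnothing)$ is a strong $K_{G}^{n}$-manifold. In this form there is essentially no obstacle; the only thing to notice is that the cohomological carrier to be fed into Proposition 2.8 is the entire space.

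If one prefers a proof not quoting Proposition 2.8, here is the direct route. Fix disjoint nonempty open sets $P,Q\subset X$, put $Y=X\setminus(P\cup Q)$, and note that (the relevant $F$ being empty) all relative groups in Definition 2.1 become absolute. The family $\{X\setminus P,\,X\setminus Q\}$ is a closed cover of the compactum $X$ with intersection $Y$; since $X\setminus P$ and $X\setminus Q$ are proper closed subsets, $\beta$ restricts to $0$ on each (as in the first paragraph), so the Mayer--Vietoris sequence of \v{C}ech cohomology yields $\beta=\delta(\gamma)$ for some $\gamma\in\check{H}^{n-1}(Y)$ with $\gamma\neq 0$. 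Now let $C$ be any partition of $X$ between $P$ and $Q$, write $X\setminus C=U_{0}\sqcup U_{1}$ with $P\subset U_{0}$, $Q\subset U_{1}$, and set $A_{i}=C\cup U_{i}$: then $\{A_{0},A_{1}\}$ is a closed cover of $X$ with $A_{0}\cap A_{1}=C$, and $A_{0}\subset X\setminus Q$, $A_{1}\subset X\setminus P$, $C\subset Y$. This is a morphism of triads over $\id_{X}$, so naturality of the connecting homomorphism gives $\delta'(i_{C}^{*}\gamma)=\delta(\gamma)=\beta\neq 0$, where $\delta'$ is the Mayer--Vietoris connecting map for $\{A_{0},A_{1}\}$ and $i_{C}\colon C\hookrightarrow Y$; in particular $i_{C}^{*}\gamma\neq 0$ in $\check{H}^{n-1}(C)$. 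Note that the bubble hypothesis is used only to produce $\gamma$; this last step is purely formal.

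It remains to phrase this in the language of Definition 2.1. Since $Y$ is a compactum, $\check{H}^{n-1}(Y)=\varinjlim_{\omega}H^{n-1}(|\omega|)$ over its finite open covers, so $\gamma=p_{\omega}^{*}(\mathrm{e})$ for some open cover $\omega$ of $Y$ and some $\mathrm{e}\in H^{n-1}(|\omega|)$, where $p_{\omega}\colon Y\to|\omega|$ is a natural map. For every partition $C$ between $P$ and $Q$, restricting the partition of unity defining $p_{\omega}$ to $C$ shows that $p_{\omega}\circ i_{C}$ is homotopic to $i_{\omega_{C}}\circ p_{\omega_{C}}$, where $i_{\omega_{C}}\colon|\omega_{C}|\hookrightarrow|\omega|$ is the subcomplex inclusion; hence $p_{\omega_{C}}^{*}(i_{\omega_{C}}^{*}\mathrm{e})=i_{C}^{*}(p_{\omega}^{*}\mathrm{e})=i_{C}^{*}\gamma\neq 0$. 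As $\omega$ and $\mathrm{e}$ do not depend on $C$, this is exactly the strong $K_{G}^{n}$-connectedness of $(X,\varnothing)$ between $P$ and $Q$, and since $P,Q$ were arbitrary we are done. In this direct version the one genuinely fiddly point is precisely this last step---identifying $\check{H}^{n-1}(Y)$ with the direct limit over nerves and matching the natural maps $p_{\omega}$, $p_{\omega_{C}}$ with the simplicial inclusion of nerves; everything else is Mayer--Vietoris and its naturality.
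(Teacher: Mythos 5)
Your proposal is correct. The paper gives no proof of this proposition at all --- it is simply quoted from \cite{ktv} --- so the relevant in-paper comparisons are Proposition 2.8 and the proof of Proposition 2.13. Your first argument is the cleaner of your two routes: setting $\beta=f^{*}(\alpha)\neq 0$, the identity $i_{A}^{*}(\beta)=f_{A}^{*}(\alpha)$ shows that $X$ itself satisfies the definition of a cohomological carrier of $\beta$ (the definition does not exclude $A=X$, and the paper's own deduction ``$(n,G)$-bubble, hence strong $K_G^n$-manifold'' in Proposition 2.10 tacitly relies on exactly this instance), so Proposition 2.8 applies with $F=\varnothing$ and you are done. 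Your second, self-contained route reproduces the Kuz'minov scheme that the paper does spell out in the proof of Proposition 2.13: $\beta$ dies on the proper closed sets $X\setminus P$ and $X\setminus Q$, so by Mayer--Vietoris it lifts to a nonzero $\gamma\in\check{H}^{n-1}(Y)$; naturality of the connecting homomorphism for the closed cover $\{C\cup U_{0},C\cup U_{1}\}$ gives $i_{C}^{*}\gamma\neq 0$ for every partition $C$; and continuity of \v{C}ech cohomology on the compactum $Y$ realizes $\gamma$ as $p_{\omega}^{*}(\mathrm{e})$ with $\omega$ and $\mathrm{e}$ independent of $C$, which is precisely strong $K_{G}^{n}$-connectedness with $F=\varnothing$. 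The only hypotheses you use silently --- that $P$ and $Q$ are nonempty, so that $X\setminus P$ and $X\setminus Q$ are proper, and that $Y$ is compact, so that finite covers suffice in the direct limit --- are harmless, since the definition is vacuous or unsatisfiable otherwise. Either version is acceptable; the reduction to Proposition 2.8 is the one worth recording.
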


\begin{pro}\cite[Theorem 1.1]{vv}
Every homogeneous metric $ANR$-continuum $X$ with $\dim_GX=n$ and $H^n(X)\neq 0$ is an $(n,G)$-bubble, and hence a strong $K_G^n$-manifold.
\end{pro}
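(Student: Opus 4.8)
What has to be proved is that $X$ is an $(n,G)$-bubble; the final clause is then immediate, for an $(n,G)$-bubble $X$ has $\check H^n(B;G)=0$, hence $i_B^{\ast}=0$, for every proper closed $B\subset X$, so $X$ is a cohomological carrier of each nonzero $\alpha\in\check H^n(X;G)$ and Proposition~2.8 (a cohomological carrier of a nonzero element of $\check H^n(X;G)$ is a strong $K^n_G$-manifold) applies. So the plan is to show $\check H^n(A;G)=0$ for every proper closed $A\subset X$. First I would reduce this: from the exact sequence of the pair $(X,A)$ and the vanishing $\check H^{n+1}(X,A;G)=0$ --- valid because $\dim_GX=n$ and $A$ is closed --- one gets $\check H^n(A;G)\cong\mathrm{coker}\bigl(\check H^n(X,A;G)\to\check H^n(X;G)\bigr)$, so it is enough to prove that $i_A^{\ast}\colon\check H^n(X;G)\to\check H^n(A;G)$ is the zero homomorphism for every proper closed $A$, i.e.\ that $X$ is a cohomological carrier of every nonzero element of $\check H^n(X;G)$.

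The next step is the auxiliary ``local cyclicity'' statement that $\check H^n_c(U;G)\neq0$ for every nonempty open $U\subset X$, where for such $U$ the \v{C}ech cohomology with compact supports is canonically $\check H^n_c(U;G)\cong\check H^n(X,X\setminus U;G)$. Since $U\neq\varnothing$ and $\mathrm{Homeo}(X)$ is transitive, the translates $h(U)$ cover $X$; take a finite subcover $h_1(U),\dots,h_k(U)$. As $X\setminus U'$ is closed for every open $U'$ and $\dim_GX=n$, every open subset of $X$ has $\check H^{n+1}_c(\,\cdot\,;G)=0$; feeding this into the Mayer--Vietoris sequences for cohomology with compact supports and inducting on the number of pieces gives that $\bigoplus_{i=1}^k\check H^n_c(h_i(U);G)\to\check H^n_c(X;G)=\check H^n(X;G)$ is onto. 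Since $\check H^n(X;G)\neq0$, some --- hence, by homogeneity, every --- summand $\check H^n_c(h_i(U);G)\cong\check H^n_c(U;G)$ is nonzero.

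Finally the crux. Suppose, for contradiction, that $i_A^{\ast}\alpha\neq0$ for some proper closed $A$ and some $\alpha$; replacing $A$ by a cohomological carrier of $\alpha$ inside $A$ (which exists by Zorn's lemma and the continuity of \v{C}ech cohomology) we may take $A\subsetneq X$ to be itself a carrier, so by Proposition~2.8 $A$ is a strong $K^n_G$-manifold. Choosing $h_1=\id,h_2,\dots,h_k\in\mathrm{Homeo}(X)$ with $\bigcap_ih_i(A)=\varnothing$ (possible as $A$ is proper) and applying the surjectivity above to the cover $\{X\setminus h_i(A)\}$ of $X$, together with the identification $\mathrm{im}\bigl(\check H^n_c(X\setminus h_i(A);G)\to\check H^n(X;G)\bigr)=\ker i_{h_i(A)}^{\ast}$, one gets
$$\sum_{i=1}^k\ker\bigl(i_{h_i(A)}^{\ast}\colon\check H^n(X;G)\to\check H^n(h_i(A);G)\bigr)=\check H^n(X;G),$$
the summands being carried onto one another by the automorphisms of $\check H^n(X;G)$ induced by the $h_i$. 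The hard part --- the step I expect to be the main obstacle --- is to upgrade this to $\ker i_A^{\ast}=\check H^n(X;G)$: one must rule out that several proper ``translated'' restriction-kernels jointly exhaust $\check H^n(X;G)$ while each stays proper. Here I would invoke the $ANR$ hypothesis to control the structure of $\check H^n(X;G)$ and of the local cohomology groups $\check H^n(V,V\setminus\{x\};G)$ of small open sets $V\ni x$ (these are defined since a homogeneous $ANR$ is locally contractible, and are independent of $x$ by homogeneity), aiming to show they are rigid enough --- generated by a single class all of whose translates are proportional --- that no such nontrivial decomposition can occur, forcing $A=X$. That local contractibility is genuinely needed is shown by the Menger curve, a homogeneous $1$-dimensional compactum with $\check H^1\neq0$ which contains circles and so is not a $(1,\mathbb Z)$-bubble; and for $X=S^n$ the required rigidity is precisely Alexander duality, so the real content of this last step is to extract duality-type behaviour from homogeneity and local contractibility alone.
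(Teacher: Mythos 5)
First, note that the paper does not actually prove this proposition: it is quoted verbatim from \cite[Theorem 1.1]{vv} (a separate preprint), so there is no in-paper argument to match yours against. Your reductions are sound and standard: the exact sequence of $(X,A)$ together with $\check H^{n+1}(X,A;G)=0$ does reduce the bubble property to showing $i_A^{\ast}=0$ for every proper closed $A$; the Mayer--Vietoris/compact-supports argument correctly yields $\check H^n_c(U;G)\neq 0$ for every nonempty open $U$; and the identity $\sum_{i}\ker i_{h_i(A)}^{\ast}=\check H^n(X;G)$ for translates with $\bigcap_i h_i(A)=\varnothing$ is a correct consequence. The ``hence a strong $K^n_G$-manifold'' clause via Proposition 2.8 is also handled correctly.

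However, the decisive step is missing, and you say so yourself. From $\sum_i\ker i_{h_i(A)}^{\ast}=\check H^n(X;G)$ one cannot conclude $\ker i_A^{\ast}=\check H^n(X;G)$: a group is very often the sum of finitely many proper subgroups, and the automorphisms $h_i^{\ast}$ relating the summands give no leverage by themselves. This is precisely the point at which the two hypotheses you have barely used --- homogeneity in its quantitative form (the Effros theorem, giving homeomorphisms moving points a small distance) and the $ANR$ property (nearby maps into $X$ are homotopic, so a homeomorphism close to $\id_X$ induces the identity on $\check H^{n}$ and carries carriers of $\alpha$ to carriers of $\alpha$) --- must be combined to kill a proper minimal carrier. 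Your proposed substitute, establishing that $\check H^n(X;G)$ and the local groups are ``generated by a single class all of whose translates are proportional,'' is not justified and is not known to hold in this generality ($\check H^n(X;G)$ need not be cyclic a priori; for $\mathbb S^n$ this rigidity is Alexander duality, which is exactly what is unavailable here). So the proposal is a correct setup with the main obstacle identified but not overcome; as it stands it does not prove the statement.
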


\begin{pro}
Let $X$ be a continuum and $F\subset X$ a nonempty nowhere dense closed subset such that the quotient space $X/F$ is a $($strong$)$ $K_G^n$-manifold.
Then $(X,F)$ is also a $($strong$)$ $K_G^n$-manifold.
\end{pro}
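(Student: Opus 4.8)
The plan is to pull back the structure witnessing that $X/F$ is a (strong) $K_G^n$-manifold along the quotient map $q\colon X\to X/F$, using that $F$ is nowhere dense to match up partitions and open sets on the two sides. First I would fix two disjoint open sets $P,Q\subset X$. Since $F$ is nowhere dense and closed, $F$ is not all of $X$; the sets $q(P\setminus F)$ and $q(Q\setminus F)$ need not be open, so instead I would work with suitable open sets in $X/F$ associated to $P$ and $Q$. The key observation is that partitions behave well: if $C$ is a partition of $X$ between $P$ and $Q$, then (after possibly enlarging $C$ by adding the single point $*=q(F)$, or by first arranging $F\subset C$, which is harmless since $F$ is nowhere dense and $X$ is connected) $q(C)$ is a partition of $X/F$ between the corresponding open sets, and conversely every partition of $X/F$ between those sets pulls back to a partition of $X$ between $P$ and $Q$. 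Thus partitions on the two sides are in natural correspondence, up to the collapsed point.

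The second step is to transport the cohomological data. Let $\omega'$ be the open cover of $(X/F)\setminus(\text{collapsed images of }P,Q)$ provided by the hypothesis on $X/F$, and set $\omega=q^{-1}(\omega')$, an open cover of $Y=X\setminus(P\cup Q)$. For a partition $C$ of $X$ between $P$ and $Q$ with $F\subset C$, the restriction $q\colon C\to q(C)$ collapses $F$ to a point, so it induces a homeomorphism of pairs $(|\omega_C|,|\omega_{C\cap F}|)\cong(|\omega'_{q(C)}|,|\omega'_{\{*\}}|)$ at the level of nerves, and the natural maps $p_{\omega_C}$ and $p_{\omega'_{q(C)}}\circ q$ are homotopic as maps of pairs. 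Moreover, since $F$ is closed and $C\cap F=F$ is collapsed to a point, the map $q\colon (C,F)\to (q(C),\{*\})$ induces an isomorphism $q^*\colon H^{n-1}(q(C),\{*\};G)\to H^{n-1}(C,F;G)$ (a point has trivial reduced cohomology, so both are the reduced cohomology of the quotient $C/F$, which equals the appropriate relative group). Chasing these identifications through the commuting square relating $p^*_{\omega_C}$ and $p^*_{\omega'_{q(C)}}$ shows that nontriviality of the latter forces nontriviality of the former; in the strong case, the distinguished element $\mathrm{e}\in H^{n-1}(|\omega'|,|\omega'_{\{*\}}|)$ pulls back to the required $\mathrm{e}\in H^{n-1}(|\omega|,|\omega_{F\cap Y}|)$ and the compatibility with the inclusion-induced maps $i^*$ is preserved because $q$ is natural with respect to restrictions.

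The main obstacle I expect is bookkeeping around partitions that do not contain $F$ and around the open sets $P,Q$: one must argue that it suffices to treat partitions $C\supset F$, and that the open sets in $X/F$ one uses genuinely "see" $P$ and $Q$. For the first point, if $C$ is an arbitrary partition of $X$ between $P$ and $Q$, then $C\cup F$ is a closed set separating $P$ from $Q$ and contains a partition $C'$ between them; but to compare $C$ and $C\cup F$ cohomologically one uses that $F$ is nowhere dense, so $F\setminus C$ is nowhere dense in the manifold $X$ and adding it does not change the relevant $H^{n-1}$ — here I would invoke the same kind of excision/continuity argument already used implicitly in Section 2, or simply observe that the hypothesis on $X/F$ quantifies over \emph{all} partitions, so it is enough to produce \emph{one} cover $\omega$ that works, and the cover $q^{-1}(\omega')$ does the job for every partition since any partition $C$ of $X$ between $P$ and $Q$ yields, via $q$, a partition of $X/F$ between the chosen open sets after adjoining the collapsed point, which lies in every partition's image trivially. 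For the second point, because $F$ is nowhere dense one checks that $P$ and $Q$ each meet $X\setminus F$ in a nonempty open set, so their images behave like genuine open sets in $X/F$ for the purpose of separating. Once these matching lemmas are in place, the cohomological transport is formal, and the "strong" case follows by carrying the element $\mathrm{e}$ along verbatim.
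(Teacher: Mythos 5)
Your overall strategy is the same as the paper's: shrink $P,Q$ to open sets missing $F$, push them forward to $X/F$, pull the cover $\gamma$ supplied by the hypothesis back to $\omega=q^{-1}(\gamma)$, identify nerves via the saturation correspondence, and factor $p_{\omega_C}^{*}$ through $q^{*}$ and the nerve isomorphism. (A small point: $q(P\setminus F)$ \emph{is} open in $X/F$, since its $q$-preimage is $P\setminus F$; the only reason to shrink further is to get disjoint images with closures missing $q(F)$.)

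The genuine gap is the reduction to partitions $C\supset F$. Definition 2.1 requires non-triviality of $p_{\omega_C}^{*}\colon H^{n-1}(|\omega_C|,|\omega_{C\cap F}|)\to H^{n-1}(C,C\cap F)$ for \emph{every} partition $C$ of $X$ between $P$ and $Q$, and a general such $C$ meets $F$ in an arbitrary closed subset, possibly empty, possibly proper. Neither of your two proposed repairs works. Replacing $C$ by $C\cup F$ (equivalently, adjoining $*$ to $q(C)$) changes the pair from $(C,C\cap F)$ to $(C\cup F,F)$; the hypothesis then gives information about $H^{n-1}(C\cup F,F)\cong \check H^{n-1}((C\cup F)/F)$ and the nerve $|\omega_{C\cup F}|$, and the only map back to your pair is restriction along $C\hookrightarrow C\cup F$, which can kill the non-trivial class. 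The supporting claim that adjoining the nowhere dense set $F$ ``does not change the relevant $H^{n-1}$'' is false: $\check H^{n-1}$ of a compactum is not insensitive to adding nowhere dense closed pieces, and no excision or continuity argument rescues this. The remark that the hypothesis ``quantifies over all partitions'' does not address the mismatch of pairs. What is actually needed (and what the paper does) is to apply the hypothesis directly to the partition $\pi(C)$ of $X/F$ and then split into the cases $C\cap F=\varnothing$ and $C\cap F\neq\varnothing$. In the first case the relative groups are absolute and the transport is immediate. In the second, $\pi(C\cap F)$ is the single point $\pi(F)$, so $|\gamma_{\pi(C\cap F)}|$ is a contractible simplex; hence $H^{n-1}(|\gamma_{\pi(C)}|,|\gamma_{\pi(C\cap F)}|)\to\check H^{n-1}(|\gamma_{\pi(C)}|)$ is surjective, the non-trivial absolute class provided by the hypothesis lifts to the relative nerve group, and commutativity of the resulting ladder gives non-triviality of $p_{\gamma,1}^{*}$, hence of $p_{\omega_C}^{*}$ after applying $\pi^{*}$ and $\tau^{*}$. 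The same surjectivity is what produces the distinguished element $\mathrm{e}_1$ in the strong case; carrying $\mathrm{e}$ along ``verbatim'' is not available until that lift is made.
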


\begin{proof}
Suppose $X/F$ is a $K_G^n$-manifold, and $U_1$ and $U_2$ two nonempty disjoint open subsets of $X$. Since $F$ is nowhere dense, there are disjoint open sets $V_1$ and $V_2$ in $X$ such that $\overline{V}_j\subset U_j\setminus F$, $j=1,2$. Then $W_j=\pi(V_j)$, $j=1,2$, are nonempty open and disjoint sets in $X/F$, where $\pi\colon X\to X/F$ is the quotient map. So, there exists a finite open cover $\gamma$ of $(X/F)\setminus (W_1\cup W_2)$ such that for any partition $P$ of $X/F$ between $W_1$ and $W_2$
any natural map $p_{\gamma_{P}}\colon P\to |\gamma_P|$ generates a non-trivial homomorphism $p_{\gamma_{P}}^*:H^{n-1}(|\gamma_P|)\to H^{n-1}(P)$. Hence, $\omega=\pi^{-1}(\gamma)$ is an open cover of $X\setminus (V_1\cup V_2)$. If $C$ is a partition of $X$ between $U_1$ and $U_2$, then $\pi(C)$ is a partition of $X/F$ between $W_1$ and $W_2$, and the function $\gamma\ni T\rightarrow \pi^{-1}(T)\in\omega$ generates a simplicial homeomorphism $\tau\colon (|\gamma_{\pi(C)}|,|\gamma_{\pi(C\cap F)}|)\to (|\omega_C|,|\omega_{C\cap F}|)$. Moreover,
the homomorphism $p_{\gamma_{\pi(C)}}^*:H^{n-1}(|\gamma_{\pi(C)}|)\to H^{n-1}(\pi(C))$ is non-trivial. Since the maps $\tau\circ p_{\gamma_{\pi(C)}}\circ\pi$ and  $p_{\omega_{C}}$ are homotopic (as maps from $(C,C\cap F)$ to $(|\omega_C|,|\omega_{C\cap F}|)$),
$p_{\omega_{C}}^*=\pi^*\circ p_{\gamma_{\pi(C)}}^*\circ\tau^*$. In case $C\cap F=\varnothing$ the complex $|\gamma_{\pi(C\cap F)}|$ is also empty. So, $H^{n-1}(|\gamma_{\pi(C)}|,|\gamma_{\pi(C\cap F)}|)=H^{n-1}(|\gamma_{\pi(C)}|)$. Then the equality $p_{\omega_{C}}^*=\pi^*\circ p_{\gamma_{\pi(C)}}^*\circ\tau^*$ implies that $p_{\omega_{C}}^*$ is non-trivial (recall that $\tau^*$, $\pi^*$ are isomorphisms and $p_{\gamma_{\pi(C)}}^*$ is non-trivial). In case $C\cap F\neq\varnothing$, $\pi(C\cap F)$ is the  point $\pi(F)$ of $X/F$ and $|\gamma_{\pi(C\cap F)}|$ is a simplex of the nerve $|\gamma_{\pi(C)}|$. We have the following commutative diagram, where the vertical maps $p_{\gamma,k}^*$ are the dual maps generated by $p_{\gamma_{\pi(C)}}\colon\pi(C)\to |\gamma_{\pi(C)}|$.
{ $$
\begin{CD}
H^{n-1}(|\gamma_{\pi(C)}|,|\gamma_{\pi(C\cap F)}|)@>{{j_1^*}}>>H^{n-1}(|\gamma_{\pi(C)}|)@>{{i_1^*}}>>H^{n-1}(|\gamma_{\pi(C\cap F)}|)\\
@ VV{p_{\gamma,1}^*}V
@VV{p_{\gamma,2}^*}V@VV{p_{\gamma,3}^*}V\\
H^{n-1}(\pi(C),\pi(C\cap F))@>{{j_2^*}}>>H^{n-1}(\pi(C))@>{{i_2^*}}>>H^{n-1}(\pi(C\cap F))
\end{CD}
$$}\\
Since $|\gamma_{\pi(C\cap F)}|)$ is a simplex, $H^{n-1}(|\gamma_{\pi(C\cap F)}|)=0$. Consequently, $j_1^*$ is surjective, which implies
that $p_{\gamma,1}^*$ is non-trivial because so is $p_{\gamma,2}^*$. Therefore, $(X,F)$ is a $K_G^n$-manifold.

Suppose $X/F$ is a strong $K_G^n$-manifold. Then there exists an element $\mathrm{e}\in H^{n-1}(|\gamma_{\pi(C)}|)$ such that
$p_{\gamma,2}^*(\mathrm{e})\in H^{n-1}(\pi(C))$ is non-trivial. Hence, so is the element $\pi^*(p^*_{\gamma,2}(\mathrm{e}))\in H^{n-1}(C)$ provided
$C\cap F=\varnothing$. But $\pi^*(p^*_{\gamma,2}(\mathrm{e}))=p^*_{\omega_C}((\tau^*)^{-1}(\mathrm{e}))$. Therefore, $(\tau^*)^{-1}(\mathrm{e})$ is an element of  $H^{n-1}(|\omega_C|)$ such that $p^*_{\omega_C}((\tau^*)^{-1}(\mathrm{e}))\in H^{n-1}(C)$ is non-trivial. This shows that $(X,F)$ is a strong $K_G^n$-manifold.
In case $C\cap F\neq\varnothing$, we have $H^{n-1}(|\gamma_{\pi(C\cap F)}|)=0$ because $|\gamma_{\pi(C\cap F)}|$ is a simplex. Then
it follows from the above diagram that there
exists $\mathrm{e}_1\in H^{n-1}(|\gamma_{\pi(C)}|,|\gamma_{\pi(C\cap F)}|)$ with $j_1^*(\mathrm{e}_1)=\mathrm{e}$ and $p_{\gamma,1}^*(\mathrm{e}_1)\neq 0$.
As in the first case, we can show that $p_{\omega_{C}}^*((\tau^*)^{-1}(\mathrm{e_1}))\in H^{n-1}(C,C\cap F)$ is also non-trivial. Hence, $(X,F)$ is a strong $K_G^n$-manifold.
\end{proof}

Let us note that the implication "$X/F$ is a $($strong$)$ $K_G^n$-manifold yields $(X,F)$ is a $($strong$)$ $K_G^n$-manifold" in Proposition 2.11 can not be inverted. For example, $(\mathbb S^1,F)$ is a strong $K_{\mathbb Z}^1$-manifold but $\mathbb S^1/F$, where $F$ is a two-point subset of $\mathbb S^1$, is not a $K_{\mathbb Z}^1$-manifold because the identification point $\pi (F)$ is a partition of $\mathbb S^1/F$.
\begin{cor}
The pair $(\mathbb I^n,\mathbb S^{n-1})$, $n\geq 1$, is a strong $K_{\mathbb Z}^n$-manifold.
\end{cor}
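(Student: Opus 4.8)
The plan is to apply Proposition 2.11 with $X=\mathbb I^n$ and $F=\mathbb S^{n-1}=\partial\mathbb I^n$. The set $F$ is clearly a nowhere dense closed subset of the continuum $\mathbb I^n$, so the only thing to check is that the quotient $\mathbb I^n/\mathbb S^{n-1}$ is a strong $K^n_{\mathbb Z}$-manifold. But collapsing the boundary sphere of the $n$-cell produces the $n$-sphere: $\mathbb I^n/\mathbb S^{n-1}\cong\mathbb S^n$. So it suffices to prove that $\mathbb S^n$ is a strong $K^n_{\mathbb Z}$-manifold, and then Proposition 2.11 immediately gives the conclusion for the pair $(\mathbb I^n,\mathbb S^{n-1})$.

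To see that $\mathbb S^n$ is a strong $K^n_{\mathbb Z}$-manifold, I would invoke Proposition 2.8: since $\check H^n(\mathbb S^n;\mathbb Z)\cong\mathbb Z\neq 0$ and $\mathbb S^n$ is a cohomological carrier of any generator $\alpha$ of $\check H^n(\mathbb S^n)$ — indeed for any proper closed $B\subsetneq\mathbb S^n$ we have $\check H^n(B;\mathbb Z)=0$, since a proper closed subset of $\mathbb S^n$ embeds in $\mathbb S^n$ minus a point $\cong\mathbb R^n$, which has $\check H^n=0$ — Proposition 2.8 shows that $\mathbb S^n$, being a cohomological carrier of a non-zero element of $\check H^n$, is a strong $K^n_{\mathbb Z}$-manifold. (Equivalently, one could note that $\mathbb S^n$ is an $(n,\mathbb Z)$-bubble and apply the reasoning behind Proposition 2.9, or use Proposition 2.10 since $\mathbb S^n$ is a homogeneous metric $ANR$-continuum with $\dim_{\mathbb Z}\mathbb S^n=n$ and $\check H^n(\mathbb S^n)\neq 0$.)

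The only mild subtlety — and the step I would be most careful about — is the homeomorphism $\mathbb I^n/\mathbb S^{n-1}\cong\mathbb S^n$ and the verification that $\mathbb S^{n-1}$ is genuinely nowhere dense in $\mathbb I^n$, so that Proposition 2.11 applies; the edge cases $n=1$ (where $\mathbb I^1/\mathbb S^0\cong\mathbb S^1$, with $\mathbb S^0$ a two-point set, nowhere dense in $[0,1]$) and the role of the fixed coefficient group being $\mathbb Z$ should be checked explicitly. Beyond that the argument is a direct combination of Proposition 2.11 with Proposition 2.8 (or 2.10), so there is no real obstacle.
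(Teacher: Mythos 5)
Your proposal is correct and follows essentially the same route as the paper: apply Proposition 2.11 to $X=\mathbb I^n$, $F=\mathbb S^{n-1}$, using that $\mathbb I^n/\mathbb S^{n-1}\cong\mathbb S^n$ is a strong $K^n_{\mathbb Z}$-manifold (the paper cites Proposition 2.10 for this last fact, while you primarily cite Proposition 2.8 and mention 2.10 as an alternative — an immaterial difference). The extra care you take with nowhere-density of $\mathbb S^{n-1}$ and the case $n=1$ is sound but does not change the argument.
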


\begin{proof}
This follows from Proposition 2.11 because $\mathbb I^n/\mathbb S^{n-1}=\mathbb S^n$ is a strong $K_{\mathbb Z}^n$-manifold according to Proposition 2.10.
\end{proof}

The next proposition provides more examples of strong $K_G^n$-manifolds (recall that the Eilenberg-MacLane complexes $K(G,n)$ have the following property: $\dim_GX\leq n$ if and only if any map $g\colon A\to K(G,n)$ can be extended over $X$, where $X$ is compact and $A\subset X$ is closed).
\begin{pro}
Let $X$ be a continuum  and $F\subset X$ a closed nowhere dense subset of $X$. If there exists a map $f\colon F\to K(G,n-1)$, which is not extendable over $X$ but it is extendable over $Y\cup F$ for any proper closed subset $Y$ of $X$, then $(X,F)$ is a strong $K_G^n$-manifold. In particular, if $\dim X=n$ and
$f\colon F\to\mathbb S^{n-1}$ is a map not extendable over $X$ but extendable over any proper closed subset of $X$ containing $F$, then $(X,F)$ is a strong
$K_{\mathbb Z}^n$-manifold.
\end{pro}

\begin{proof}
Since $H^{n-1}(F)$ is isomorphic to the group $[F;K(G,n-1)]$ of pointed homotopy classes of maps from $F$ to $K(G,n-1)$, let $\alpha\in H^{n-1}(F)$ be the homotopy class $[f]$ of $f$. Let $A$ be a proper closed subset of $X$. Consider the following diagram, where the vertical arrows are homomorphisms generated by the corresponding inclusions
{ $$
\begin{CD}
H^{n-1}(X)@>{{i_1^*}}>>H^{n-1}(F)@>{{\delta_1}}>>H^{n}(X,F)\\
@ VV{j_1^*}V
@VV{j_2^*}V@VV{j_3^*}V\\
H^{n-1}(A)@>{{i_2^*}}>>H^{n-1}(A\cap F)@>{{\delta_2}}>>H^{n}(A,A\cap F).
\end{CD}
$$}\\
Because $f$ is not extendable over $X$, $\alpha\not\in i_1^*(H^{n-1}(X))$. Hence, $\beta=\delta_1(\alpha)$ is a non-trivial element of
$H^{n}(X,F)$. On the other hand, since the interior of $F$ in $X$ is empty, $F\cup A\neq X$. So,
$f$ is extendable over $A\cup F$, which implies that the restriction $f|(A\cap F)$ is extendable over $A$.
Hence, $[f|(A\cap F)]=j_2^*(\alpha)$ belongs to $i_2^*(H^{n-1}(A))$. So, $\delta_2(j_2^*(\alpha))=j_3^*(\beta)=0$. In this way we proved that $\beta$ is a non-zero element of $H^{n}(X,F)$ whose image under the homomorphism $j_3^*\colon H^{n}(X,F)\to H^{n}(A,A\cap F)$ is trivial for every proper closed subset $A\subset X$.

Now, following the proof of Theorem 1 from \cite{ku}, one can show that $(X,F)$ is a strong $K_G^n$-manifold.
Suppose $U_1$ and $U_2$ are non-empty open subsets of $X$ with disjoint closures, and let $m_k:Y_k\hookrightarrow X$ be the inclusion of
$Y_k=X\backslash U_k$ into $X$, $k=1,2$, and $Y=Y_1\cap Y_2$. Consider the Mayer-Vietoris exact sequence
{ $$
\begin{CD}
H^{n-1}(Y,Y\cap F)@>{{\delta}}>>H^{n}(X,F)@>{{j}}>>\bigoplus\limits_{k=1}\limits^{2}H^{n}(Y_k,Y_k\cap F)
\end{CD}
$$}\\
with $j=(m_1^*,m_2^*)$. Since $j(\beta)=0$, there exists a non-trivial element $\gamma\in H^{n-1}(Y,Y\cap F)$ with $\delta(\gamma)=\beta$.
Consequently, there exist
an open cover $\omega$ of $Y$  and
$\mathrm{e}\in H^{n-1}(|\omega|,|\omega_{Y\cap F}|)$ with $p_\omega^*(\mathrm{e})=\gamma$.

Let $C$ be a partition of $X$ between $U_1$ and $U_2$. So, $X=P_1\cup P_2$ and $C=P_1\cap P_2$, where each $P_k$ is
a closed subset of $X$ containing $U_k$, $k=1,2$. Denote by $i:C\hookrightarrow Y$, $i_{12}:P_1\hookrightarrow Y_2$
and $i_{21}:P_2\hookrightarrow Y_1$ the corresponding inclusions.
Then we have the following commutative diagram, whose rows are Mayer-Vietoris
sequences:

{ $$
\begin{CD}
H^{n-1}(Y,Y\cap F)@>{{\delta}}>>H^{n}(X,F)@>{{j}}>>\bigoplus\limits_{k=1}\limits^{2}H^{n}(Y_k,Y_k\cap F)\\
@ VV{i^*}V
@VV{id}V@VV{i_{12}^*\oplus i_{21}^*}V\\
H^{n-1}(C,C\cap F)@>{{\delta_C}}>>H^{n}(X,F)@>{{j_C}}>>\bigoplus\limits_{k=1}\limits^{2}H^{n}(P_k,P_k\cap F).
\end{CD}
$$}\\
Obviously, $$\delta_C(i^*(\gamma))=id(\delta(\gamma))=\beta\neq 0.\leqno{(2)}$$ On the other hand, the commutativity of the diagram

{ $$
\begin{CD}
H^{n-1}(|\omega|,|\omega_{Y\cap F}|)@>{{p_{\omega}^*}}>>H^{n-1}(Y,Y\cap F)\\
@ VV{i_C^*}V
@VV{i^*}V\\
H^{n-1}(|\omega_C|,|\omega_{C\cap F}|)@>{{p_{\omega_C}^*}}>>H^{n-1}(C,C\cap F)
\end{CD}
$$}\\
implies that $p_{\omega(C)}^*(i_C^*(\mathrm{e}))=i^*(p_{\omega}^*(\mathrm{e}))=i^*(\gamma)$. Therefore, according to (2),
$p_{\omega(C)}^*(i_C^*(\mathrm{e}))\neq 0$.

Suppose now that $\dim X=n$ and $f\colon F\to\mathbb S^{n-1}$ is a map not extendable over $X$ but extendable over any proper closed subset of $X$ containing $F$. Since $\mathbb S^{n-1}$ as an $n$-skeleton of $K(\mathbb Z,n-1)$, $f$ considered as a map to  $K(\mathbb Z,n-1)$ is not extendable over $X$ because any such an extension would yield an extension of $f$ (see the proof of \cite[Theorem 1.4]{dr}). So, $(X,F)$ is a strong $K_{\mathbb Z}^n$-manifold.
\end{proof}

We say that an $n$-system $\mathrm{S}=\{(F_i^+,F_i^-):i=1,..,n\}$ of pairs with any pair consisting of closed disjoint subset of $X$ is {\em essential} if for any sequence $\{C_i\}$ of partitions $C_i$ of $X$ between $F_i^+$ and $F_i^-$ we have $\bigcap_{i=1}^{n}C_i\neq\varnothing$.
A space $X$ is said to be {\em spanned on $\mathrm{S}$} if $\mathrm{S}$ is essential but the system $\mathrm{S_Y}=\{(F_i^+\cap Y,F_i^-\cap Y):i=1,..,n\}$ is not essential for every proper closed subset $Y\subset X$. Observe that if $S$ spans $X$, then the set $F=\bigcup_{i=1}^{n}(F_i^+\cup F_i^-)$ is nowhere dense in $X$.

\begin{cor}
Let $X$ be an $n$-dimensional continuum and $\mathrm{S}$ be an $n$-system in $X$, $n\geq 2$, spanning $X$.  Then $(X,F)$ is a strong $K_{\mathbb Z}^n$-manifold, where $F=\bigcup_{i=1}^{n}(F_i^+\cup F_i^-)$.
\end{cor}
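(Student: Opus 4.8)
The plan is to reduce the statement to Proposition~2.13: I will show that $F$ is nowhere dense in $X$ and that there is a map $f\colon F\to K(G,n-1)$ which does not extend over $X$ but does extend over $F\cup Y$ for every proper closed $Y\subset X$; then Proposition~2.13 yields that $(X,F)$ is a strong $K_G^n$-manifold.

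\emph{$F$ is nowhere dense.} Suppose not; then one of the closed sets $F_i^{\pm}$, say $F_{i_0}^{+}$, has nonempty interior, so we may choose an open $W$ with $\overline W\subset\inte F_{i_0}^{+}$ and put $Y=X\setminus W$, a proper closed subset of $X$. Given any partitions $C_i$ of $Y$ between $F_i^{+}\cap Y$ and $F_i^{-}\cap Y$, write $Y\setminus C_i=A_i\sqcup B_i$ with $F_i^{+}\cap Y\subset A_i$ and $F_i^{-}\cap Y\subset B_i$; then $U_i=A_i\cup W$ is open in $X$ (since $A_i$ is open in the closed set $Y$), $\overline{U_i}$ misses $F_i^{-}$ (as $\overline W\subset F_{i_0}^{+}$ is disjoint from $F_i^{-}$, $F_i^{-}\subset Y$, and $\overline{A_i}\cap F_i^{-}=\varnothing$), and $F_i^{+}\subset U_i$. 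Hence $\tilde C_i:=\bd U_i$ is a partition of $X$ between $F_i^{+}$ and $F_i^{-}$; moreover $W\subset U_i$ forces $\tilde C_i\subset Y$, and $\tilde C_i\cap Y\subset C_i$. Since $\mathrm S$ is essential, $\bigcap_i\tilde C_i\neq\varnothing$, and being contained in $Y$ this intersection lies in $\bigcap_i C_i$. Thus every family of partitions of $Y$ realizing $\mathrm S_Y$ has nonempty intersection, i.e.\ $\mathrm S_Y$ is essential, contradicting the assumption that $X$ is spanned on $\mathrm S$.

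\emph{Construction of $f$.} Using metrizability, pick $h_i\colon X\to[-1,1]$ with $h_i^{-1}(1)=F_i^{+}$, $h_i^{-1}(-1)=F_i^{-}$, and put $\phi=(h_1,\dots,h_n)\colon X\to\mathbb I^n$, so that $\phi^{-1}(\partial\mathbb I^n)=\bigcup_i h_i^{-1}(\{-1,1\})=F$, i.e.\ $\phi$ is a map of pairs $(X,F)\to(\mathbb I^n,\partial\mathbb I^n)$. By the classical theorem of Eilenberg and Otto in cohomological form, essentiality of $\mathrm S$ is equivalent to $\beta:=\phi^{*}(u)\neq 0$ in $H^{n}(X,F;\IZ)$, where $u$ generates $H^{n}(\mathbb I^n,\partial\mathbb I^n;\IZ)$; likewise, for any proper closed $A\subset X$ the system $\mathrm S_A$ is not essential, so $(\phi|_A)^{*}(u)=\beta|_{(A,A\cap F)}=0$. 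Since $u$ dies in $H^{n}(\mathbb I^n;\IZ)=0$, $\beta$ dies in $H^{n}(X;\IZ)$, so by exactness $\beta=\delta(\alpha_0)$ for some $\alpha_0\in H^{n-1}(F;\IZ)$, $\delta$ the connecting homomorphism of $(X,F)$. For a suitable $\gamma\in G\setminus\{0\}$ (see below) let $\alpha\in H^{n-1}(F;G)$ be the image of $\alpha_0$ under the coefficient homomorphism $\IZ\to G$, $1\mapsto\gamma$, and let $f\colon F\to K(G,n-1)$ represent $\alpha$ under the identification $H^{n-1}(F;G)=[F;K(G,n-1)]$. Then $f$ extends over $F\cup Y$ (for $Y$ proper closed) iff $\alpha|_{F\cap Y}$ comes from $H^{n-1}(Y;G)$, iff $\delta_Y(\alpha|_{F\cap Y})=\bigl(\beta|_{(Y,F\cap Y)}\bigr)\otimes\gamma$ vanishes in $H^{n}(Y,F\cap Y;G)$ — which it does, since $\beta|_{(Y,F\cap Y)}=0$. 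Similarly $f$ fails to extend over $X$ iff $\delta(\alpha)=\beta\otimes\gamma\neq 0$ in $H^{n}(X,F;G)$.

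\emph{The remaining point and the main obstacle.} It remains to find $\gamma\in G\setminus\{0\}$ with $\beta\otimes\gamma\neq 0$. Since $\dim X=n$ we have $H^{n+1}(X,F;\IZ)=0$, so the universal coefficient theorem for \v{C}ech cohomology gives $H^{n}(X,F;G)\cong H^{n}(X,F;\IZ)\otimes G$, under which $\beta\otimes\gamma$ is the image of $\beta$ under $\IZ\to G$, $1\mapsto\gamma$. So it suffices to know that $\beta$ generates an infinite cyclic direct summand of $H^{n}(X,F;\IZ)$, and this is where I expect the real work to lie: essentiality of $\mathrm S$ only gives $\beta\neq0$, whereas one must use the full spanning hypothesis — that $\beta$ is a nonzero class whose restriction to $(A,A\cap F)$ vanishes for every proper closed $A\subsetneq X$, a ``minimal carrier'' condition — together with $\dim X=n$; the argument should follow the Hopf-type reasoning in the proof of Theorem~1 of \cite{ku}. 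With such a $\gamma$ fixed, $f$ satisfies the hypotheses of Proposition~2.13 and the proof is complete. By contrast, the nowhere-density step and the translation of (non-)essentiality into the (non-)vanishing and (non-)restriction of $\beta$ are routine once the Eilenberg--Otto correspondence is in place.
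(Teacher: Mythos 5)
Your route is essentially the paper's: form the diagonal map $g=(g_1,\dots,g_n)\colon X\to\mathbb I^n$, restrict it to $f=g|F\colon F\to\mathbb S^{n-1}$, convert (in)essentiality of $\mathrm S$ via the Eilenberg--Hopf extension criterion into the statement that $f$ is not extendable over $X$ but $f|(A\cap F)$ is extendable over every proper closed $A\subset X$, and then run the argument of Proposition~2.13. Two steps do not go through as written. Your nowhere-density argument fails for $i\neq i_0$: only the two members of a single pair $(F_i^+,F_i^-)$ are assumed disjoint, so $F_i^-$ (and $F_i^+$) may very well meet $F_{i_0}^+\supset\overline W$. Then $U_i=A_i\cup W$ can contain points of $F_i^-$, and $\mathrm{bd}\,U_i$ can contain points of $\overline W\setminus W$ lying in $B_i$, so $\mathrm{bd}\,U_i$ is neither a partition between $F_i^{\pm}$ nor has $\mathrm{bd}\,U_i\cap Y\subset C_i$. (The claim itself is true, but it should be obtained from the standard lemma that a partition $C_i$ of the closed subspace $Y$ between $F_i^{\pm}\cap Y$ extends to a partition $\tilde C_i$ of $X$ between $F_i^{\pm}$ with $\tilde C_i\cap Y\subset C_i$; then $\bigcap_i\tilde C_i\cap W\subset\tilde C_{i_0}\cap W=\varnothing$ since $W\subset F_{i_0}^+$, while $\bigcap_i\tilde C_i\cap Y\subset\bigcap_i C_i=\varnothing$.) In fact the detour is unnecessary: the paper does not invoke Proposition~2.13 as a black box but only reuses the Mayer--Vietoris half of its proof, for which the two facts $\beta\neq 0$ in $H^{n}(X,F)$ and $\beta|_{(A,A\cap F)}=0$ for every proper closed $A$ suffice; nowhere density enters Proposition~2.13 only to derive the second fact from the extension hypothesis, and here it comes for free from the inessentiality of $\mathrm S_A$.

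The more serious problem is the one you flag yourself and leave open: producing $\gamma\in G$ with $\beta\otimes\gamma\neq 0$ in $H^{n}(X,F;G)$. As submitted, the proposal therefore proves the corollary only for $G=\mathbb Z$, where $\gamma=1$ works because $\beta\neq 0$. You have actually put your finger on a point the paper glosses over: its proof is carried out entirely with $\mathbb Z$ coefficients --- which yields that $(X,F)$ is a strong $K^n_{\mathbb Z}$-manifold --- and the passage to $K^n_G$ for a general $G$ is asserted without comment. So the diagnosis is correct, but the missing step is not supplied, and the sufficient condition you propose (that $\beta$ generate an infinite cyclic direct summand of $H^{n}(X,F;\mathbb Z)$) is stronger than what is needed and is not an evident consequence of the spanning hypothesis.
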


\begin{proof}
Let $g_i\colon X\to J$, $J=[-1,1]$, be a function with $g_i(F_i^+)=1$ and $g_i(F_i^-)=-1$, $i=1,..,n$. Denote by $g\colon X\to\mathbb J^n$ the diagonal product   map $g(x)=(g_1(x),...,g_n(x))$, $x\in X$. Obviously, $f=g|F$ is a map from $F$ into $\mathbb S^{n-1}$ (here $\mathbb S^{n-1}$ is identified with the boundary of $\mathbb J^n$).

Since $\mathrm{S}$ is essential on $X$ and inessential on every proper closed subset of $X$, the map $f$ is not extendable over $X$ but it is extendable over any proper closed set of $X$ containing $F$.
Indeed, suppose $\bar{f}\colon X\to\mathbb S^{n-1}$ is an extension of $f$. Then for each $i$ the set $C_i=\bar{f}^{-1}(P_i)$, where $P_i=\{x\in\mathbb I^n:x_i=0\}$,  is a partition of $X$ between $F_i^+$ and $F_i^-$ and $\bigcap_{i=1}^{n}C_i=\varnothing$, a contradiction. If $Y$ is a proper closed subset of $X$ containing $F$, there exist partitions $L_i$ of $X$ between $F_i^+$ and $F_i^-$ such that $\bigcap_{i=1}^{n}L_i\cap Y=\varnothing$.
According to \cite[Lemma 1, pp. 339]{ap}, there are closed $G_\delta$-sets $D_i$ in $Y$, which are partitions between $F_i^+$ and $F_i^-$, such that such that $\bigcap_{i=1}^{n}D_i=\varnothing$
and $Y\cap L_i\subset D_i$ for each $i$. Then we can construct functions
$h_i\colon Y\to [-1,1]$ with $h_i(F_i^+)=1$, $h_i(F_i^-)=-1$ and $D_i=h_i^{-1}(0)$, $1\leq i\leq n$. Obviously, the diagonal product $h$ of all $h_i$ is a map from $Y$ into $\mathbb J^n$ such that $a=(0,...,0)\not\in h(Y)$. Let $r\colon\mathbb J^n\setminus\{a\}\to\mathbb S^{n-1}$ be a retraction and $\widetilde{f}=r\circ h$. It follows from the definition of $f$ and $\widetilde{f}$ that $f(x)$ and $\widetilde{f}(x)$ belong to the same face of $\mathbb J^n$ for every $x\in F$. This implies that $f$ and $\widetilde{f}|F$ are homotopic. So, by the homotopy extension theorem, $f$ is extendable over $Y$.

Finally, according to Proposition 2.13, $(X,F)$ is a strong $K_{\mathbb Z}^n$-manifold.
\end{proof}

Corollary 2.14 extends a result proved independently by Hamamdzhi-ev \cite{ham} and the first author \cite{vt1} that, under the hypotheses of this corollary, $X$ is a $V^n$-continuum.

\section{$K^n_G$-manifolds and Mazurkiewicz manifolds}
Another type of connectedness is inspired by Mazurkiewicz $\mathcal C$-mani-folds, where $\mathcal C$ is a given class of spaces.

\begin{dfn}
Let $P, Q\subset X$ be nonempty open subsets of $X$ with disjoint closures. The space $X$ is said be {\em $M(\mathcal C)$-connected between $P$ and $Q$} if
for every set $F\subset X$ with $F\in\mathcal C$ there exists a continuum $K\subset X\setminus F$ connecting $P$ and $Q$.
\end{dfn}

It is well known \cite{ht} that any continuum $X$ which is $A(D^{n-2})$-connected between two nonempty open sets $P, Q\subset X$ with disjoint closures
 is $M(\mathcal C)$-connected between $P$ and $Q$ with respect to the class $\mathcal C$ of all normally placed $(n-2)$-dimensional subsets of $X$ (here $D^{n-2}$ is the class of all spaces of covering dimension $\dim\leq n-2$). We don't know if this holds when the covering dimension is replaced by the cohomological dimension $\dim_G$. But this is true if instead of $A(D^{n-2})$-connectedness the stronger $K^n_G$-connectedness of $X$ is assumed (recall that a set $M$ is a normally placed in $X$ provided every two disjoint closed in $M$ sets have disjoint open in $X$ neighborhoods; for example, every $F_{\sigma}$ - subset of a normal space is normally placed in that space).

\begin{thm}  Let the pair $(X,F)$ be strongly $K^n_G$-connected between two nonempty open disjoint sets $P, Q\subset X$. Then for every
Lindel\"{o}ff normally placed subset $M\subset X$ with $\dim_G M\leq n-2$ there exists a continuum $K\subset X\setminus M$  connecting $P$ and $Q$.
\end{thm}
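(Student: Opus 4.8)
\emph{Proof proposal.} I would argue by contradiction and reduce everything to the defining property of a strong $K^n_G$-connected pair. So assume that no continuum $K\subseteq X\setminus M$ connects $P$ and $Q$. The heart of the proof is a purely topological step: to produce a partition $C$ of $X$ between $P$ and $Q$ with $C\subseteq M$. This rests on the classical fact that, in a compactum, ``$M$ cuts between $P$ and $Q$'' (no continuum in the complement joins them) is equivalent to ``$M$ contains a partition between $P$ and $Q$''. For a \emph{closed} $M$ this is standard; for a Lindel\"{o}ff normally placed $M$ I would recover it by exhausting the candidate separations of $X\setminus M$ along a countable cofinal family and using the normal placement of $M$ to thicken the resulting pieces to disjoint open subsets of the ambient $X$ whose union omits only a subset of $M$. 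Here the hypothesis $\dim_G M\le n-2$ is used to guarantee that $M$ is small enough — in particular that it does not absorb $P$ or $Q$ — so that an honest partition contained in $M$ is actually available.

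Granting such a $C$, the conclusion follows quickly. Being a partition of $X$ between $P$ and $Q$, the set $C$ is disjoint from $P\cup Q$, hence $C\subseteq Y:=X\setminus(P\cup Q)$; being closed in $X$ and contained in $M$, it is closed in $M$, so $\dim_G C\le\dim_G M\le n-2$. Since $C\cap F$ is closed in $C$, the definition of cohomological dimension gives $H^{n-1}(C,C\cap F;G)=0$. On the other hand, strong $K^n_G$-connectedness of $(X,F)$ between $P$ and $Q$ furnishes an open cover $\omega$ of $Y$ together with a class $\mathrm{e}\in H^{n-1}(|\omega|,|\omega_{F\cap Y}|)$ such that for every partition of $X$ between $P$ and $Q$ — in particular for this $C$ — the natural map $p_{\omega_C}\colon(C,C\cap F)\to(|\omega_C|,|\omega_{C\cap F}|)$ satisfies $p_{\omega_C}^*(i_{\omega_C}^*(\mathrm{e}))\neq 0$ in $H^{n-1}(C,C\cap F;G)$. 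This contradicts the vanishing just obtained, and the contradiction proves the theorem.

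Thus the only real difficulty is the topological reduction in the first paragraph: turning ``no continuum in $X\setminus M$ joins $P$ and $Q$'' into a genuine partition contained in $M$ when $M$ is only assumed to be normally placed and Lindel\"{o}ff rather than closed. An alternative that avoids proof by contradiction would be to construct $K$ directly as the intersection of a decreasing sequence of subcontinua of $X$ joining $P$ and $Q$, at each stage using $\dim_G M\le n-2$ to push an approximating partition off $M$; this is closer in spirit to Mazurkiewicz's original argument, but the bookkeeping is heavier, so I would prefer the contradiction route. In either case it should in fact suffice, for the contradiction, to obtain merely \emph{some} partition $C$ between $P$ and $Q$ with $\dim_G C\le n-2$; insisting on $C\subseteq M$ is simply the most transparent way to see that dimension bound.
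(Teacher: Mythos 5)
Your reduction in the first paragraph is the whole theorem, and it does not go through as sketched. The equivalence ``every continuum in $X\setminus M$ from $P$ to $Q$ meets $M$'' $\Longleftrightarrow$ ``$M$ contains a partition of $X$ between $P$ and $Q$'' is a classical fact only for \emph{closed} $M$ (via the Kuratowski theorem on quasi-components). A partition is by definition closed in $X$, and a non-closed Lindel\"{o}ff normally placed set that cuts need not contain any closed separator at all; indeed, if it did, your argument would show (using only plain $K^n_G$-connectedness, since $H^{n-1}(C,C\cap F;G)=0$ already kills the non-triviality of $p_{\omega_C}^*$) that the theorem is almost immediate, which it is not. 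Your proposed repair --- thickening the pieces of a separation of $X\setminus M$ using normal placement --- produces disjoint open sets whose complement is a closed set contained in a \emph{neighbourhood} of $M$, not in $M$ itself; such a set has no reason to satisfy $\dim_G\le n-2$ or $H^{n-1}(\,\cdot\,,\,\cdot\cap F;G)=0$, so the cohomological contradiction evaporates exactly where it is needed.

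The paper's proof is organized around precisely this obstacle, and in the opposite logical order. After shrinking $P,Q$ to $P_1,Q_1$ (so that $M_1=M\setminus(P_1\cup Q_1)$ is closed in $M$, hence Lindel\"{o}ff and normally placed in $Y=X\setminus(P_1\cup Q_1)$, with $H^{n-1}(M_1,M_1\cap F)=0$ from $\dim_G M_1\le n-2$), one builds --- via a careful countable swelling construction exploiting normal placement --- an open set $W\supseteq M_1$ in $Y$ whose nerve $(|\sigma|,|\sigma_{W\cap F}|)$ is simplicially homeomorphic to the nerve of a cover of $M_1$ refining $\omega$, so that the distinguished class $\mathrm{e}$ pulls back to $0$ over $W$. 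One then shows $W$ can contain \emph{no} partition between $\overline{P}_1$ and $\overline{Q}_1$ (a partition in $W$ would force $p_{\omega_C}^*(i_{\omega_C}^*(\mathrm{e}))=0$), and since $W$ is open its complement is closed, so the closed-set version of the cutting/partition equivalence applies and yields a continuum in $X\setminus \tilde W\subseteq X\setminus M_1$ joining $\overline{P}_1$ and $\overline{Q}_1$ --- the desired contradiction. The swelling step, which transfers the vanishing of $\mathrm{e}$ from the non-closed set $M_1$ to an honest open neighbourhood while controlling the nerve, is the technical heart of the proof and is exactly what your proposal leaves out.
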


\begin{proof}
Suppose there exists a Lindel\"{o}ff normally placed subset $M\subset X$ with $\dim_G M\leq n-2$ such that any continuum $K$ connecting $P$ and $Q$ meets $M$. According to \cite[Lemma 2.5]{kktv}, there are nonempty open sets $P_1, Q_1\subset X$ such that $\overline{P}_1\subset P$,
$\overline{Q}_1\subset Q$ and if $K$ is a continuum in $X$ joining $\overline{P}_1$ and $\overline{Q}_1$, then $K\cap M\setminus(P_1\cup Q_1)\neq\varnothing$. We are going to show this is not true, which will complete the proof.

Let $\omega$ be a finite open cover of $Y=X\setminus (P_1\cup Q_1)$ and $\mathrm{e}$ an element of $H^{n-1}(|\omega|,|\omega_{F\cap Y}|)$, both satisfying the requirements from Definition 2.1 with $P$ and $Q$ replaced by $P_1$ and $Q_1$, respectively.
Obviously, $M_1=M\setminus(P_1\cup Q_1)$ is closed in $M$. So, $M_1$ is also Lindel\"{o}ff and normally placed in $Y$. Moreover,
 $\dim_G M_1\leq n-2$, which implies $H^{n-1}(M_1,M_1\cap F)=0$. Consequently, there exist an open cover $\gamma$ of $M_1$ refining $\omega$ and a simplicial map $$\pi^\gamma_\omega:(|\gamma|,|\gamma_{M_1\cap F}|)\rightarrow (|\omega_{M_1}|,|\omega_{M_1\cap F}|)\hbox{~}\mbox{such that}\hbox{~}
(\pi^\gamma_\omega)^*(i_{M_1}^*(\mathrm{e}))=0,$$
where $i_{M_1}$ is the embedding $(|\omega_{M_1}|,|\omega_{M_1\cap F}|)\hookrightarrow (|\omega|,|\omega_{F}|)$.
Since $M_1$ is Lindel\"{o}ff, we may assume that $\gamma$ is countable. Next, let $\gamma_1=\{A_i:1,2,..\}$ be a system of closed in $M_1$ sets covering $M_1\cap F$ such that $A_i\subset U_i\cap F$, where $U_i\in\gamma$, for all $i$.
Using the proof of \cite[Theorem 3.1.1]{re}, we find an open in $M_1$ system  $\{L_i:i=1,2,..\}$ such that $\lambda_1=\{\overline{L}^{M_1}_i:i=1,2,..\}$ is a swelling of $\gamma_1$ with $\overline{L}^{M_1}_i\subset U_i$, $i\geq 1$, where $\overline{L}^{M_1}_i$ is the closure of $L_i$ in $M_1$. Recall that $\lambda_1$ is a swelling of $\gamma_1$ if each $\overline{L}^{M_1}_i$ contains $A_i$ and
for any finite set of indices $i_1,..,i_s$ we have $\bigcap_{k=1}^{s}\overline{L}^{M_1}_{i_k}\neq\varnothing$ iff $\bigcap_{k=1}^{s}A_{i_k}\neq\varnothing$.
Consider the Lindel\"{o}ff space $M_1\setminus L$, where $L=\bigcup_{i\geq 1}L_i$. If $M_1\setminus L$ is non-empty, there exists a countable system $\lambda_2=\{B_i:i=1,2,..\}$ of closed subsets of $M_1$ such that $\lambda_2$ covers $M_1\setminus L$, $\lambda_2$ refines $\gamma$ and all $B_i$ are contained in $M_1\setminus F$. In case $M_1\setminus L=\varnothing$, $\lambda_2$ is the empty family. Obviously, $\lambda=\lambda_1\cup\lambda_2$ is a closed countable cover of $M_1$ refining $\gamma$. Let $\lambda=\{\Lambda_i:i=1,2,..\}$. Using again the proof of \cite[Theorem 3.1.1]{re}, we find an open in $M_1$ swelling $\theta=\{W_i:i=1,2,..\}$ of $\lambda$ such that $\theta$ refines $\gamma$. Moreover, we can insist that $W_i\cap F=\varnothing$ provided $\Lambda_i\cap F=\varnothing$.

\textit{Claim. There exists an open in $Y$ swelling $\tilde{\sigma}=\{V_i:i=1,2,..\}$  of $\lambda$ refining $\omega$ such that $\Lambda_i\subset\overline{V}_i\cap M_1\subset W_i$ for all $i$.}

We follow the proof of Lemma 4.1.2 from \cite{vt1}. Obviously, it suffices to construct by induction open in $Y$ swellings $\sigma_m=\{V_1,...,V_m\}$ of $\{\Lambda_1,...,\Lambda_m\}$, $m\geq 1$, such that
\begin{itemize}
\item $\sigma_{m+1}=\sigma_m\cup\{V_{m+1}\}$;
\item $\Lambda_k\subset V_k\cap M_1\subset\overline{V}_k\cap M_1\subset W_k$, $k\geq 1$;
\item each $V_i$ is contained in an element of $\omega$.
\end{itemize}
 Suppose we already constructed $\sigma_m$ for some $m$. Since $\Lambda_{m+1}$ and $M\setminus W_{m+1}$ are closed disjoint subsets of $M_1$ and $M_1$ is normally placed in $Y$, there are disjoint open in $Y$ sets $V_{m+1}'$ and $\tilde{V}_{m+1}$ with $\Lambda_{m+1}\subset V_{m+1}'$ and $M_1\setminus W_{m+1}\subset\tilde{V}_{m+1}$. Let $E$ be the union of all intersections $\overline{V}_{i_1}\cap...\cap\overline{V}_{i_s}$, $1\leq i_k\leq m$, such that $\bigcap_{k=1}^{s}\Lambda_{i_k}\cap\Lambda_{m+1}=\varnothing$. We claim that $E\cap\Lambda_{m+1}=\varnothing$. Indeed, otherwise  $\bigcap_{k=1}^{p}\overline{V}_{j_k}\cap\Lambda_{m+1}\neq\varnothing$ for some indices $j_1,..,j_p$ such that $j_k\leq m$ and $\bigcap_{k=1}^{p}\Lambda_{j_k}\cap\Lambda_{m+1}=\varnothing$. Then
 $$\bigcap_{k=1}^{p}\overline{V}_{j_k}\cap\Lambda_{m+1}\subset\bigcap_{k=1}^{p}\overline{V}_{j_k}\cap M_1\cap \Lambda_{m+1}\subset\bigcap_{k=1}^{p}W_{j_k}\cap W_{m+1}\neq\varnothing.$$ Since $\{W_i:i=1,2,..\}$ is a swelling of $\lambda$, this implies that $\bigcap_{k=1}^{p}\Lambda_{j_k}\cap\Lambda_{m+1}\neq\varnothing$, a contradiction. To finish the inductive step, observe that $\lambda$ refines $\omega$. So, there exists $O\in\omega$ containing $\Lambda_{m+1}$. We set $V_{m+1}=V_{m+1}'
 \cap O\setminus E$,  and
$\sigma_{m+1}=\sigma_m\cup\{V_{m+1}\}$. It is easily seen that $\sigma_{m+1}$ is a swelling of $\{\Lambda_i:i=1,2,..,m+1\}$, which completes the proof of the claim.

Recall that $\theta$ is a refining of $\gamma$. Moreover, any refining function between $\theta$ and $\gamma$ provides a simplicial map
$$\pi^\theta_\gamma:(|\theta|,|\theta_{M_1\cap F}|)\rightarrow (|\gamma_{M_1}|,|\gamma_{M_1\cap F}|)\hbox{~}\mbox{such that} (\pi^\theta_\omega)^*(i_{M_1}^*(\mathrm{e}))=0,$$
where $\pi^\theta_\omega$ is the composition $\pi^\gamma_\omega\circ\pi^\theta_\gamma$. Let $\theta_j=\{W_i:\Lambda_i\in\lambda_j\}$, $j=1,2$. Obviously, $\theta=\theta_1\cup\theta_2$.
Consider the open in $Y$ system
$$\sigma=\{V_i\setminus F:W_i\in\theta_2\}\cup\{V_i:W_i\in\theta_1\}\hbox{~}\mbox{and let}\hbox{~}W=\bigcup_{\Sigma\in\sigma}\Sigma.$$ It follows from our construction that $\sigma=\{\Sigma_i:i=1,2,..\}$ has the following property: for any indices $i_1,..,i_s$ we have
$$\bigcap_{k=1}^{s}\Sigma_{i_k}\neq\varnothing\hbox{~}\mbox{iff}\hbox{~}\bigcap_{k=1}^{s}W_{i_k}\neq\varnothing\hbox{~}\mbox{and}\hbox{~}
\bigcap_{k=1}^{s}\Sigma_{i_k}\cap F\neq\varnothing\hbox{~}\mbox{iff}\hbox{~}\bigcap_{k=1}^{s}W_{i_k}\cap F\neq\varnothing.$$
 Therefore, there exists a simplicial homeomorphism $\xi$ between the nerves $(|\theta|,|\theta_{M_1\cap F}|)$ and $(|\sigma|,|\sigma_{W\cap F}|)$. So, $\xi^{-1}$ is also a simplicial homeomorphism from  $(|\sigma|,|\sigma_{W\cap F}|)$ onto $(|\theta|,|\theta_{M_1\cap F}|)$. Then for the simplicial map $\pi$ below we have
$$\pi=i_{M_1}\circ\pi_\omega^\theta\circ\xi^{-1}\colon (|\sigma|,|\sigma_{W\cap F}|)\to (|\omega|,|\omega_{F\cap Y}|), \hbox{~}\pi^*(\mathrm{e})=0.$$

Suppose the set $W$ contains a partition $C$ of $X$ between $\overline{P}_1$ and $\overline{Q}_1$ and consider the following diagram, where $i$ and $i_C$ are natural inclusions:
\[
\begin{diagram}
\node{(|\sigma_C|,|\sigma_{F\cap C}|)}\arrow{e,t}{i}\node{(|\sigma|,|\sigma_{W\cap F}|)}\arrow{e,t}{\pi}\node{(|\omega|,|\omega_{F\cap Y}|)}\\
\node{(C,C\cap F)}\arrow{n,r}{p_{\sigma_C}}\arrow[2]{e,t}{i_C}\node[2]{(Y,F\cap Y)}\arrow{n,r}{p_{\omega}}\\
\end{diagram}
\]  It follows from our construction that for every $x\in C$ (resp., $x\in C\cap F$) both points $\pi(i(p_{\sigma_C}(x)))$ and  $p_\omega(i_C(x))$ belong to a simplex of the nerve $|\omega|$ (resp. $|\omega_{F\cap Y}|$). So, the maps $\pi\circ i\circ p_{\sigma_C}$ and $p_\omega\circ i_C$ are homotopic and we have the commutative diagram:

\[
\begin{diagram}
\node{H^{n-1}(|\omega|,|\omega_{F\cap Y}|)}\arrow{e,t}{\pi^*}\arrow{s,r}{p_{\omega}^*}
\node{H^{n-1}(|\sigma|,|\sigma_{W\cap F}|)}\arrow{e,t}{i^*}
\node{H^{n-1}(|\sigma_C|,|\sigma_{C\cap F}|)}\arrow{s,r}{p_{\sigma_C}^*}\\
\node{H^{n-1}(Y,F\cap Y)}\arrow[2]{e,t}{i_C^*}\node[2]{H^{n-1}(C,C\cap F)}
\end{diagram}
\]
Because $\pi^*(\mathrm{e})=0$, the last diagram implies
$$i_C^*(p_\omega^*(\mathrm{e})=0.\leqno{(2)}$$
On the other hand, the following diagram

\[
\begin{diagram}
\node{H^{n-1}(|\omega|,|\omega_{F\cap Y}|)}\arrow{e,t}{i_{\omega_C}^*}\arrow{s,r}{p_{\omega}^*}
\node{H^{n-1}(|\omega_C|,|\omega_{C\cap F}|)}\arrow{s,r}{p_{\omega_C}^*}\\
\node{H^{n-1}(Y,F\cap Y)}\arrow{e,t}{i_C^*}\node{H^{n-1}(C,C\cap F)}
\end{diagram}
\]
yields, by $(2)$,
$i_C^*(p_\omega^*(\mathrm{e})=p_{\omega_{C}}^*(i^{\ast}_{\omega_{C}}(\mathrm{e}))=0$, a contradiction.

Hence, $W$ does not contain any partition of $X$ between $\overline{P}_1$ and $\overline{Q}_1$. Choose an open set $\tilde{W}$ such that both sets $\overline{P}_1\setminus\tilde{W}$ and $\overline{Q}_1\setminus\tilde{W}$ are non-empty and $\tilde{W}\cap Y=W$.
Suppose that $X\setminus\tilde{W}$ is not connected between $\overline{P}_1\setminus\tilde{W}$ and $\overline{Q}_1\setminus\tilde{W}$, i.e. $X\setminus\tilde{W}=A\cup B$ with $A, B$ being disjoint closed sets such that $\overline{P}_1\setminus\tilde{W}\subset A$ and $\overline{Q}_1\setminus\tilde{W}\subset B$. Then the sets $B'=B\cup\overline{Q}_1$
and $A'=A\cup\overline{P}_1$ are disjoint and closed in $X$ whose union is a proper subset of $X$. So, there is a partition $C$ in $X$ between $A'$ and $B'$. Obviously, $C$ is a partition between $\overline{P}_1$ and $\overline{Q}_1$ which is contained in $W$, a contradiction. Hence,
$X\setminus\tilde{W}$ is connected between
$\overline{P}_1\setminus\tilde{W}$ and $\overline{Q}_1\setminus\tilde{W}$.
This implies (see \cite[\S47, Theorem 3, p. 170]{kur}) the existence of a continuum $K\subset X\setminus\tilde{W}$ connecting $\overline{P}_1$ and $\overline{Q}_1$. Finally, since $M_1\subset\tilde{W}$, $K\subset X\setminus M_1$. This contradicts the fact that any continuum connecting $\overline{P}_1$ and $\overline{Q}_1$ should meet $M_1$.
\end{proof}

\begin{cor}
Every strong $K^n_G$-manifold is a Mazurkiewicz $D^{n-2}_G$-manifold.
\end{cor}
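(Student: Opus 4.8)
The plan is to read the corollary off Theorem~3.4 above; essentially the only thing to do is to bridge the gap between the ``countable union of proper closed sets'' appearing in Definition~\ref{dfn2} and the ``single Lindel\"{o}ff, normally placed set of cohomological dimension $\le n-2$'' hypothesis of Theorem~3.4.

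So let $(X,F)$ be a strong $K^n_G$-manifold, and, to verify Definition~\ref{dfn2} for the class $\mathcal C=D^{n-2}_G$, fix two disjoint closed sets $X_0,X_1\subset X$ with non-empty interiors together with a set $M=\bigcup_{i=0}^{\infty}M_i\subset X$ in which each $M_i$ is a proper closed subset of $X$ with $\dim_G M_i\le n-2$. (I write $M$ rather than the symbol $F$ used in Definition~\ref{dfn2}, to avoid a clash with the distinguished closed set $F$ of the pair $(X,F)$.) Put $P=\inte X_0$ and $Q=\inte X_1$. Then $P$ and $Q$ are disjoint non-empty open subsets of $X$, so by the definition of a strong $K^n_G$-manifold the pair $(X,F)$ is strongly $K^n_G$-connected between $P$ and $Q$, which is the first hypothesis of Theorem~3.4.

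It then remains to see that $M$ satisfies the other hypothesis of Theorem~3.4. First, $M$ is $\sigma$-compact --- hence Lindel\"{o}ff --- since each $M_i$ is a closed, thus compact, subset of the compactum $X$. Second, $M$ is an $F_\sigma$-subset of the normal space $X$, hence normally placed in $X$, as recalled just before Theorem~3.4. Third, $\dim_G M\le n-2$ by the countable sum theorem for cohomological dimension, applied to the cover of $M$ by the sets $M_i$ (each of which is closed in $M$). Theorem~3.4 now provides a continuum $K\subset X\setminus M$ meeting both $P$ and $Q$; since $P\subset X_0$ and $Q\subset X_1$, this $K$ is a continuum in $X\setminus M$ joining $X_0$ and $X_1$, which is exactly what Definition~\ref{dfn2} asks for. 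Hence $X$ is a Mazurkiewicz $D^{n-2}_G$-manifold.

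I do not expect a genuine obstacle here: all the geometric content is already in Theorem~3.4, and what remains is bookkeeping. The step that relies on an outside result --- and is therefore the nearest thing to a difficulty --- is the appeal to the countable sum theorem for cohomological dimension, which is precisely what allows the $F_\sigma$-set $M$ of Definition~\ref{dfn2} to be replaced by a single set of cohomological dimension $\le n-2$. Everything else uses only that $X$ is a compactum: the Lindel\"{o}ff and normally placed properties of $M$, the passage from the closed sets $X_0,X_1$ to their interiors, and the remark that a continuum meeting the open sets $P$ and $Q$ automatically meets $X_0$ and $X_1$.
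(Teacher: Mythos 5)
Your argument is correct and is exactly the intended derivation: the corollary follows from the theorem whose hypothesis is a Lindel\"{o}ff, normally placed set with $\dim_G M\le n-2$ (that is Theorem~3.2, not Theorem~3.4, whose hypotheses involve $H^{n-1}(M,M\cap F)=0$ instead), via $\sigma$-compactness, the fact that $F_\sigma$-sets in normal spaces are normally placed, and the countable closed sum theorem for $\dim_G$. Apart from that mislabeling, the bookkeeping matches what the paper leaves implicit.
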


The proof of Theorem 3.2 provides a stronger conclusion.

\begin{thm}  Let the pair $(X,F)$ be strongly $K^n_G$-connected between two nonempty open disjoint sets $P, Q\subset X$ and $M$
be a Lindel\"{o}ff normally placed subset of $X$ with $H^{n-1}(M,M\cap F)=0$.
Then in each of the following two cases there exists a continuum $K\subset X\setminus M$  connecting $P$ and $Q$.
\begin{itemize}
\item[(1)] $M\subset X\setminus (P\cup Q)$;
\item[(2)] $\dim_GM\leq n-1$ and $F\cap M$ is a $G_\delta$-set in $M$.
\end{itemize}
\end{thm}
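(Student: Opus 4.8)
The plan is to run the proof of Theorem~3.2 essentially verbatim; the point is that that argument uses the hypothesis $\dim_GM\le n-2$ at exactly one place. Namely, after \cite[Lemma~2.5]{kktv} replaces $P,Q$ by smaller open sets $P_1,Q_1$ with $\overline P_1\subset P$, $\overline Q_1\subset Q$ and the property that every continuum joining $\overline P_1$ and $\overline Q_1$ meets $M_1:=M\setminus(P_1\cup Q_1)$, one notes that $M_1$ is closed in $M$, hence Lindel\"{o}ff and normally placed in $Y=X\setminus(P_1\cup Q_1)$, and that $\dim_GM_1\le n-2$, so that $H^{n-1}(M_1,M_1\cap F)=0$. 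Everything afterwards --- the refinement of $\omega$ that kills $i^{*}_{M_1}(\mathrm{e})$, the swelling construction, the open set $\tilde W\supset M_1$, and the continuum $K\subset X\setminus\tilde W$ joining $\overline P_1$ and $\overline Q_1$ --- uses only that $M_1$ is a closed Lindel\"{o}ff normally placed subset of $Y$ with $H^{n-1}(M_1,M_1\cap F)=0$, together with the strong $K^n_G$-connectedness of $(X,F)$ between $P_1$ and $Q_1$. Hence it suffices, in each of the two cases, to check $H^{n-1}(M_1,M_1\cap F)=0$; the contradiction produced by the proof of Theorem~3.2 then refutes the assumption that every continuum joining $P$ and $Q$ meets $M$, and we obtain $K\subset X\setminus M$. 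In Case~(1) there is nothing to prove: since $P_1\subset P$, $Q_1\subset Q$ and $M\subset X\setminus(P\cup Q)$, we have $M\cap(P_1\cup Q_1)=\varnothing$, so $M_1=M$ and $H^{n-1}(M_1,M_1\cap F)=H^{n-1}(M,M\cap F)=0$ by hypothesis.

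For Case~(2) the set $M_1$ is in general a proper closed subset of $M$, and we must deduce $H^{n-1}(M_1,M_1\cap F)=0$ from $H^{n-1}(M,M\cap F)=0$, $\dim_GM\le n-1$, and the assumption that $F\cap M$ --- hence also $M_1\cap F=(F\cap M)\cap M_1$ --- is a $G_\delta$-set. First note that $M$ is paracompact: it is Lindel\"{o}ff and, being normally placed, normal (hence regular). Write $M\cap F=\bigcap_kA_k$ with the $A_k$ closed in $M$ and decreasing, so $M_1\cap F=\bigcap_k(A_k\cap M_1)$ with $A_k\cap M_1$ closed in $M_1$. By continuity of \v{C}ech cohomology, $H^{n-1}(M,M\cap F)=\varinjlim_kH^{n-1}(M,A_k)$ and $H^{n-1}(M_1,M_1\cap F)=\varinjlim_kH^{n-1}(M_1,A_k\cap M_1)$. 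For each $k$ the restriction $H^{n-1}(M,A_k)\to H^{n-1}(M_1,A_k\cap M_1)$ is onto: by excision $H^{n-1}(M_1,A_k\cap M_1)\cong H^{n-1}(M_1\cup A_k,A_k)$, and in the exact sequence of the triple $(M,M_1\cup A_k,A_k)$ the cokernel of $H^{n-1}(M,A_k)\to H^{n-1}(M_1\cup A_k,A_k)$ injects into $H^{n}(M,M_1\cup A_k)$, which is $0$ because $M_1\cup A_k$ is closed in $M$ and $\dim_GM\le n-1$. These restrictions are compatible with the bonding maps and the left-hand direct limit is $0$ by hypothesis; therefore the right-hand direct limit is $0$, i.e.\ $H^{n-1}(M_1,M_1\cap F)=0$.

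With $H^{n-1}(M_1,M_1\cap F)=0$ in hand, the proof of Theorem~3.2 applies unchanged in both cases. The one genuinely new ingredient is the cohomological step of Case~(2): the hypothesis $H^{n-1}(M,M\cap F)=0$ is an $(n-1)$-dimensional --- not top-dimensional --- statement and so is not a consequence of $\dim_GM\le n-1$, and it has to be transported by hand to the closed subset $M_1$. Here $\dim_GM\le n-1$ is used precisely to make the restriction maps surjective (to kill $H^{n}(M,M_1\cup A_k)$), the $G_\delta$ hypothesis is used to set up the continuity/direct-limit machinery, and the Lindel\"{o}ff + normally placed hypotheses guarantee that everything takes place in a paracompact space, so that the \v{C}ech exact sequences of triples and the closed-cover excision isomorphism are available. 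Keeping this bookkeeping straight is the main obstacle; the remainder is a direct appeal to Theorem~3.2.
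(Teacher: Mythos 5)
Your overall reduction is exactly the paper's: run the proof of Theorem~3.2 with $M_1=M\setminus(P_1\cup Q_1)$ and observe that the only input needed is $H^{n-1}(M_1,M_1\cap F)=0$; case (1) is then immediate since $M_1=M$, and case (2) reduces to showing that the restriction $H^{n-1}(M,M\cap F)\to H^{n-1}(M_1,M_1\cap F)$ is surjective. The gap is in how you establish that surjectivity. Your appeal to ``continuity of \v{C}ech cohomology'' for a decreasing sequence of closed sets $A_k$ with $\bigcap_kA_k=M\cap F$ is not valid here: continuity is a property of inverse limits of \emph{compact} pairs, and the tautness available for paracompact spaces concerns limits over \emph{neighborhoods}, not over arbitrary decreasing closed subsets. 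Since $M$ is only Lindel\"{o}ff (it is a normally placed, generally non-closed subset of the compactum $X$), the pairs $(M,A_k)$ need not be compact, and the isomorphism $H^{n-1}(M,M\cap F)\cong\varinjlim_kH^{n-1}(M,A_k)$ can fail; already in degree $0$, taking $M=\mathbb R$ and $A_k=[k,\infty)$ gives $\varinjlim_kH^{0}(M,A_k)=0$ while $H^{0}(M,\bigcap_kA_k)=H^0(M,\varnothing)\neq0$. Both continuity claims (for $M$ and for $M_1$) are load-bearing in your argument, so as written case (2) does not close.

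It is worth noting that your finite-stage mechanism --- the exact sequence of the triple together with strong excision for the closed cover $\{M_1,A_k\}$ --- applied \emph{directly} to the triple $(M,\,M_1\cup(M\cap F),\,M\cap F)$ would give the surjectivity with no limits at all, using only that $M_1\cup(M\cap F)$ is closed in $M$ and $H^{n}(M,B)=0$ for closed $B$ (i.e.\ $\dim_GM\le n-1$); this would in fact make the $G_\delta$ hypothesis superfluous, at the price of assuming the triple sequence and closed-cover excision for non-compact paracompact pairs in the \v{C}ech theory. The paper avoids exactly this issue by a different route: it collapses $M\cap F$ to a point and invokes Bartik's theorem to identify $H^{n-1}(M,M\cap F)$ with $\check H^{n-1}(M/(M\cap F))$ and likewise for $M_1$; the $G_\delta$ hypothesis is used to write $M\setminus F$ as an $F_\sigma$ in $M$, whence $\dim_G(M/(M\cap F))\le n-1$, and the surjectivity onto $\check H^{n-1}(M_1/(M_1\cap F))$ then follows from the exact sequence of that (absolute) pair. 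If you want to keep your triple-plus-excision idea, you should either justify those two properties for closed pairs in the paracompact space $M$ or switch to the quotient-space/Bartik argument; the direct-limit detour should be removed in either case.
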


\begin{proof}
It suffices to prove the theorem in case $\overline{P}\cap\overline{Q}=\varnothing$. The proof of item (1) follows from the proof of Theorem 3.2 considering $M$, $P$ and $Q$ instead of $M_1$, $P_1$ and $Q_1$, respectively. For the second item, according to the proof of Theorem 3.2, we need to show that $H^{n-1}(M_1,M_1\cap F)=0$, where $M_1=M\setminus (P\cup Q)$. That will be done if
we show that the homomorphism $H^{n-1}(M,M\cap F)\to H^{n-1}(M_1, F\cap M_1)$ is surjective. To this end, consider the quotient map $\pi\colon M\to M/(M\cap F)$ (here $M/(M\cap F)$ is the quotient space obtained from $M$ by shrinking $M\cap F$ to a point). Since $F\cap M$ is a $G_\delta$ set in $M$, $M\setminus F$ is an $F_\sigma$-subset of $M$. This implies that $\dim_G M\setminus F\leq n-1$. On the other hand,  $(M/(M\cap F))\setminus\{\pi(M\cap F)\}$ is homeomorphic to $M\setminus F$, so, $\dim_GM/(M\cap F)\leq n-1$.
Consequently, the homomorphism $H^{n-1}(M/(M\cap F))\to H^{n-1}(M_1/(F\cap M_1))$ is surjective.
Finally, by a result of Bartik \cite{ba}, $H^{n-1}(M,M\cap F)=H^{n-1}(M/F)$ and
$H^{n-1}(M_1, M_1\cap F)=H^{n-1}(M_1/(F\cap M_1))$, which completes the proof.
\end{proof}


We say that a subset $M\subset X$  cuts $X$ between two disjoint sets $A,B\subset X$ if $(A\cup B)\cap M=\varnothing$ and every continuum in $X$ joining $A$ and $B$ meets $M$. The next corollary follows from Theorem 3.4 and Proposition 2.10.

\begin{cor}
Let $X$ be a homogeneous metric $ANR$-continuum with $\dim_GX=n$ and $H^{n}(X)\neq 0$. Then $H^{n-1}(M)\neq 0$ for every set $M\subset X$, which cuts $X$ between two disjoint open subsets of $X$.
\end{cor}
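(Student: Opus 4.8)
The plan is to argue by contradiction and to reduce the assertion to an application of Proposition 2.10 followed by part (1) of Theorem 3.4, taken with $F=\varnothing$. So suppose $\check H^{n-1}(M)=0$, and let $P,Q\subset X$ be the two disjoint open sets between which $M$ cuts $X$. By the definition of ``cutting'' we have $(P\cup Q)\cap M=\varnothing$, i.e. $M\subset X\setminus(P\cup Q)$, and every continuum in $X$ joining $P$ and $Q$ meets $M$. The aim is to contradict the second property by producing a continuum in $X\setminus M$ joining $P$ and $Q$.

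First I would prepare the ground for Proposition 2.10. We may assume that $X$ is connected, i.e. a continuum: this is required by Proposition 2.10 and is in any case implicit in the statement, since for a disconnected $X$ the empty set would cut $X$ between open subsets lying in distinct components. One also needs $\dim_G X=n$, not merely $\dim_G X\geq n$; the inequality $\dim_G X\geq n$ is immediate from $\check H^n(X)\neq0$, and the reverse inequality is precisely the point at which the homogeneity and the $ANR$ character of $X$ are used, which I would cite rather than reprove. Granting this, Proposition 2.10 shows that $X$ --- that is, the pair $(X,\varnothing)$ --- is a strong $K_G^n$-manifold, hence strongly $K_G^n$-connected between the disjoint open sets $P$ and $Q$.

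Next I would check that $M$ meets the remaining hypotheses of Theorem 3.4. Being a subspace of a compact metric space, $M$ is separable metrizable, hence Lindel\"of. For normal placement, fix a compatible metric $d$ on $X$ and let $C_0,C_1$ be disjoint sets closed in $M$; since $C_j=\overline{C_j}^{\,X}\cap M$, we have $d(x,C_0)+d(x,C_1)>0$ for every $x\in M$, so the sets $U_j=\{x\in X: d(x,C_j)<d(x,C_{1-j})\}$ are open in $X$, disjoint, and contain $C_j$ respectively; thus every subspace of $X$ --- in particular $M$ --- is normally placed in $X$. Finally, with $F=\varnothing$ the group $H^{n-1}(M,M\cap F)$ is just $\check H^{n-1}(M)=0$ by assumption, and $M\subset X\setminus(P\cup Q)$ as observed. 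We are therefore exactly in case (1) of Theorem 3.4, applied to $(X,\varnothing)$, $P$, $Q$ and $M$, which yields a continuum $K\subset X\setminus M$ connecting $P$ and $Q$. This contradicts the fact that $M$ cuts $X$ between $P$ and $Q$; hence $\check H^{n-1}(M)\neq0$.

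I expect the only genuinely delicate point to be the verification that Proposition 2.10 is applicable, namely the passage from $\check H^n(X)\neq0$ to $\dim_G X=n$ for the homogeneous metric $ANR$-compactum $X$; this is the step that actually consumes the structural hypotheses on $X$, and I would quote it rather than reprove it here. The Lindel\"of and normal-placement properties of $M$, and the purely terminological translation between ``cutting'' and the statement of Theorem 3.4, are routine.
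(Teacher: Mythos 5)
Your route is exactly the paper's (the paper's entire proof of this corollary is the sentence that it ``follows from Theorem 3.4 and Proposition 2.10''), and your routine verifications are correct: $M$ is Lindel\"of and normally placed because every subspace of a compact metric space is; the definition of ``cutting'' gives $M\subset X\setminus(P\cup Q)$, so case (1) of Theorem 3.4 applies with $F=\varnothing$ and $H^{n-1}(M,M\cap F)=\check H^{n-1}(M)$; and the resulting continuum in $X\setminus M$ joining $P$ and $Q$ contradicts the cutting hypothesis.

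The genuine gap is the one step you propose to ``cite rather than reprove.'' There is no theorem asserting that a homogeneous metric $ANR$ compactum with $\check H^{n}(X;G)\neq 0$ satisfies $\dim_G X\leq n$, and the implication is false: take $X=\mathbb S^2\times\mathbb S^5$, $G=\mathbb Z$, $n=2$. This is a homogeneous compact metric $ANR$ with $\check H^{2}(X;\mathbb Z)\neq 0$ and $\dim_{\mathbb Z}X=7$; it is not a $(2,\mathbb Z)$-bubble (the proper closed subset $\mathbb S^2\times\{pt\}$ has non-trivial $\check H^{2}$), so Proposition 2.10 is inapplicable and your argument breaks at that point. In fact the same example shows the corollary is false as literally stated: $M=\mathbb S^2\times\mathbb S^4$ (with $\mathbb S^4$ an equator of $\mathbb S^5$) cuts $X$ between the two sets $\mathbb S^2\times(\mbox{open polar caps})$, yet $\check H^{1}(M;\mathbb Z)=0$ by the K\"unneth formula. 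So the hypothesis $\dim_G X=n$ (which is present in Proposition 2.10 and in the Bing--Borsuk setting this corollary is meant to address) has to be \emph{assumed}, not derived from homogeneity and $\check H^{n}(X)\neq 0$. Once it is added to the statement, your argument is complete and coincides with the paper's intended proof; without it, no repair of this step is possible.
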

Corollary 3.5 is interesting because the Bing-Borsuk question \cite{bb} whether $H^{n-1}(M)\neq 0$ for any partition of a homogeneous metric $ANR$-space $X$ of dimension $n$ is still unanswered.
When $M$ is a partition of $X$, this corollary was established by the second author in \cite{vv}.




The next corollary can be compared with the classical Mazurkiewicz theorem \cite{ma} that
 any region $X$ in the Euclidean  space $\mathbb R^n$ has the following property: if $M\subset X$ with $\dim M\leq n-2$, then  every two points from  $X\setminus M$ can be joined by a continuum $K\subset X\setminus M$.
\begin{cor}
Let $M$ be either a subset of $\mathbb S^n$ or a bounded subset of $\mathbb R^n$ with $H^{n-1}(M;\mathbb Z)=0$. Then every pair of disjoint open sets  $P, Q\subset\mathbb S^n$ $($resp., $P, Q\subset\mathbb R^n$$)$ such that $(P\cup Q)\cap M=\varnothing$ can be joined by a continuum in $\mathbb S^n\setminus M$ $($resp., in $\mathbb R^n\setminus M$$)$. If, in addition $\dim M\leq n-1$, the requirement $(P\cup Q)\cap M=\varnothing$ can be removed.
\end{cor}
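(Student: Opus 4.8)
The plan is to realize both ambient spaces as strong $K^n_{\mathbb{Z}}$-manifolds and then invoke Theorem~3.4; essentially all the work has already been done in Proposition~2.10, Corollary~2.12 and Theorem~3.4.

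For $\mathbb{S}^n$: it is a homogeneous metric $ANR$-continuum with $\dim_{\mathbb{Z}}\mathbb{S}^n=n$ and $\check{H}^n(\mathbb{S}^n;\mathbb{Z})=\mathbb{Z}\neq 0$, so by Proposition~2.10 it is a strong $K^n_{\mathbb{Z}}$-manifold, i.e.\ $(\mathbb{S}^n,\varnothing)$ is strongly $K^n_{\mathbb{Z}}$-connected between any two disjoint open sets $P,Q$. Next I would check the side hypotheses of Theorem~3.4 for an arbitrary $M\subset\mathbb{S}^n$: $M$ is Lindel\"{o}ff, being a subspace of a separable metric space, and $M$ is normally placed, being a subspace of a metric space --- indeed, for disjoint sets $A,B$ closed in $M$ the open sets $\{x:d(x,A)<d(x,B)\}$ and $\{x:d(x,B)<d(x,A)\}$ separate them, because the $\mathbb{S}^n$-closure of a set closed in $M$ meets $M$ only in that set. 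With $F=\varnothing$ the remaining hypothesis $H^{n-1}(M,M\cap F)=0$ is precisely $\check{H}^{n-1}(M;\mathbb{Z})=0$, which is assumed. Theorem~3.4(1) then yields a continuum in $\mathbb{S}^n\setminus M$ joining $P$ and $Q$ whenever $(P\cup Q)\cap M=\varnothing$; and if in addition $\dim M\leq n-1$, then $\dim_{\mathbb{Z}}M\leq\dim M\leq n-1$ and $F\cap M=\varnothing$ is a $G_\delta$ in $M$, so Theorem~3.4(2) removes the requirement $(P\cup Q)\cap M=\varnothing$.

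For a bounded $M\subset\mathbb{R}^n$: rather than compactifying, I would localize to a large ball. Fix a closed ball $D\subset\mathbb{R}^n$ with $M\subset\mathrm{int}\,D$ and with $\mathrm{int}\,D$ meeting both $P$ and $Q$. By Corollary~2.12 the pair $(D,\partial D)$, which is homeomorphic to $(\mathbb{I}^n,\mathbb{S}^{n-1})$, is a strong $K^n_{\mathbb{Z}}$-manifold, hence strongly $K^n_{\mathbb{Z}}$-connected between the disjoint open sets $P'=P\cap\mathrm{int}\,D$ and $Q'=Q\cap\mathrm{int}\,D$ of $D$. Apply Theorem~3.4 to $(D,\partial D)$ with $F=\partial D$: since $M\cap\partial D=\varnothing$ we have $H^{n-1}(M,M\cap F)=\check{H}^{n-1}(M;\mathbb{Z})=0$, and $M$ is again Lindel\"{o}ff and normally placed in $D$. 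Part~(1) produces a continuum $K\subset D\setminus M\subset\mathbb{R}^n\setminus M$ meeting $P'\subset P$ and $Q'\subset Q$ when $(P\cup Q)\cap M=\varnothing$; part~(2) does the same without that restriction when $\dim M\leq n-1$, since then $\dim_{\mathbb{Z}}M\leq n-1$ and $\partial D\cap M=\varnothing$ is a $G_\delta$ in $M$.

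I do not anticipate a real obstacle. The one point worth spelling out is why one works inside $D$ rather than in $\mathbb{S}^n=\mathbb{R}^n\cup\{\infty\}$: a continuum joining $P$ and $Q$ obtained from the sphere case could pass through $\infty\notin M$, and so need not lie in $\mathbb{R}^n\setminus M$, whereas the ball argument keeps everything inside $\mathbb{R}^n$. The other thing to record is that every subset of a compact metric space is Lindel\"{o}ff and normally placed, which is what lets Theorem~3.4 be applied to an unrestricted $M$. (When $n=1$ the hypothesis $\check{H}^0(M;\mathbb{Z})=0$ forces $M=\varnothing$, so that case is trivial, and otherwise the above applies verbatim.)
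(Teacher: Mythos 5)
Your proof is correct and follows essentially the same route as the paper: the sphere case via Proposition~2.10 and Theorem~3.4 (the paper routes this through Corollary~3.5, which is itself just that combination), and the bounded case by localizing to a copy of $(\mathbb I^n,\mathbb S^{n-1})$ containing $M$ in its interior and applying Corollary~2.12 with Theorem~3.4. Your explicit verification of the Lindel\"{o}ff/normally placed/$G_\delta$ hypotheses and the remark on why one must stay inside the ball rather than pass to $\mathbb S^n=\mathbb R^n\cup\{\infty\}$ are worthwhile details the paper leaves implicit.
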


\begin{proof}
Obviously, the case when $M\subset\mathbb S^n$ follows from Corollary 3.5. When $M$ is a bounded subset of $\mathbb R^n$, we take an $n$-dimensional cube $B$ whose interior contains $M$ and meets both $P$ and $Q$. Then the proof follows from Corollary 2.12 and Theorem 3.4.
\end{proof}

Let us mention that the requirement in Corollary 3.6 $M$ to be bounded in $\mathbb R^n$ is essential. Indeed, any $(n-1)$-dimensional hyperplane in $\mathbb R^n$ is a counterexample. Also, because every $n$-dimensional set in $\mathbb R^n$ has a non-empty interior, the condition $(P\cup Q)\cap M=\varnothing$ can not be dropped unless $\dim M\leq n-1$.

\section{Homology manifolds and Mazurkiewicz manifolds}

In this section we are going to show that some homological properties of a metric space $X$ imply that $X$ is a {\em Mazurkiewicz arc $n$-manifold} in the following sense: If $M$ is an $F_\sigma$-subset of $X$ with $\dim M\leq n-2$, then any two disjoint sets $A$ and $B$, both having non-empty interiors, can be joined by an arc. Obviously, every Mazurkiewicz arc $n$-manifold is a Mazurkiewicz  manifold with respect to the class of all spaces whose covering dimension $\dim$  is $\leq n-2$.

Everywhere in this section we consider singular homology groups reduced in dimension zero with coefficients in a given group $G$ (if $G$ is not written then the coefficients are integers). The following notion introduced by Toru\'{n}czyk \cite{to78} is well known:
A closed subset $A$ of a space $X$ is said to be a {\em  $Z_n$-set in $X$} if for any map $f\colon\mathbb I^n\to X$ and any open cover $\omega$ of $X$ there is a map $g\colon\mathbb I^n\to X\setminus A$ which is $\omega$-close to $f$ (i.e., for all $x\in\mathbb I^n$ both points $f(x)$ and $g(x)$ belong to some element of $\omega$). A homological counterpart of this notion was defined by
Banakh-Cauty-Karassev \cite{bck}: A closed subset $A\subset X$ is called a {\em $G$-homological $Z_n$-set in $X$} if $H_k(U,U\setminus A;G)=0$ for all $k\leq n$ and all open $U\subset X$. It follows from the excision axiom that a point $x\in X$ is a $G$-homological $Z_n$-set in $X$ provided
$H_k(X,X\setminus x;G)=0$ for all $k\leq n$.

Further, let us remind the definition of the {\em separating dimension $t(X)$} of a space $X$ introduced by Steinke \cite{st} and its transfinite extension $trt(X)$ given by Arenas-Chatyrko-Puertas \cite{acp}: $trt(X)=-1$ iff $X=\varnothing$; $trt(X)\leq\alpha$ for an ordinal $\alpha$ if any closed set $B\subset X$ containing at least two points can be separated by a closed set $P\subset B$ with $trt(P)<\alpha$. When $trt(X)$ is an integer, then $trt(X)=t(X)$. Moreover, $t(X)\leq\rm{ind}(X)$ \cite{st}. Hence, for metrizable $X$ we have $t(X)\leq\rm{ind}(X)\leq\dim X$.

Following Mitchell \cite{mi}, we say that {\em $X$ has the property $H(n,G)$ at the points of a set $M\subset X$} if $H_k(X,X\setminus x;G)=0$ for all $k\leq n$ and all $x\in M$. When $M=X$ in the above definition, $X$ is said to have the {\em $H(n,G)$-property}.

\begin{thm}
Let $X$ be a complete metric space and $M$ be an $F_\sigma$-subset of $X$ such that $trt(M)\leq n-2$ and $X$ has the property $H(n-1,G)$ at the points of $M$. Suppose $P, Q\subset X$ are open sets which can be joined by an arc in $X$. Then there is an arc in $X\setminus M$ joining $P$ and $Q$. In particular, any arcwise connected open subset of $X$ is a Mazurkiewicz arc $n$-manifold provided $X$ has the property $H(n-1,G)$.
\end{thm}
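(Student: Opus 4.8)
The plan is to reroute the given arc, in countably many steps, so as to avoid $M$, and then to straighten the resulting path into an arc. Write $M=\bigcup_{i\ge 1}F_i$ with each $F_i$ closed in $X$ and $F_i\subset F_{i+1}$; then $trt(F_i)\le n-2$ and $X$ has the property $H(n-1,G)$ at the points of $F_i$. Everything rests on the following homological lemma, which I regard as the heart of the matter: \emph{if $F\subset X$ is closed, $X$ has $H(n-1,G)$ at the points of $F$, and $trt(F)\le m$ for a non-negative integer $m$, then $H_k(U,U\setminus F;G)=0$ for every open $U\subset X$ and every $k\le n-1-m$}; applied with $m=n-2$ this says each $F_i$ is a $G$-homological $Z_1$-set in $X$.

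To prove the lemma I would argue by induction on $m$ (the case $m=-1$, i.e.\ $F=\varnothing$, being trivial). By excision the hypothesis localizes to $H_k(V,V\setminus\{x\};G)=0$ for $x\in F$ and $k\le n-1$, for every open $V\ni x$. One then covers $F$ by small open subsets of $X$ adapted to its separating structure — so that the relevant traces on $F$ of the intersections of cover elements carry separators of $trt\le m-1$, to which the inductive hypothesis applies — and runs a relative Mayer--Vietoris (equivalently, a \v{C}ech/Leray nerve) spectral-sequence argument over such a cover; the vanishing range $k\le n-1$ for points then degrades to the range $k\le n-1-m$ for $F$ itself, each extra level of the nerve costing one unit of homological degree. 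This bookkeeping, and the verification that covers with the required properties exist, is where the real difficulty lies; the rest of the argument is soft.

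Granting the lemma, first note that $H_0(U,U\setminus F_i;G)=0$ forces $F_i$ to have empty interior (take $U\subset F_i$ open; here $G\neq 0$, the other case being vacuous), so $M$ is meager and, $X$ being a Baire space, $P\setminus M$ and $Q\setminus M$ are nonempty. Next, the long exact sequence of $(U,U\setminus F_i)$ together with the vanishing of $H_0$ and $H_1$ shows that $H_0(U\setminus F_i;G)\to H_0(U;G)$ is an isomorphism; since these groups are free on the sets of path components, the inclusion $U\setminus F_i\hookrightarrow U$ induces a bijection of path components for every open $U$. In particular, any two points of $U\setminus F_i$ joined by a path in $U$ are joined by a path in $U\setminus F_i$.

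Now take the given arc, viewed as a path $L_0\colon\I\to X$ from a point of $P$ to a point of $Q$, and build inductively paths $g_0=L_0,g_1,g_2,\dots$ in $X$ from $P$ to $Q$ with $g_i$ missing $F_1\cup\dots\cup F_i$: given $g_{i-1}$, which lies at a positive distance from the closed set $F_1\cup\dots\cup F_{i-1}$, subdivide $\I$ finely into subintervals whose $g_{i-1}$-images have small diameter and whose endpoints are mapped off $F_i$ (possible since $F_i$ is nowhere dense, after a harmless preliminary adjustment of $g_{i-1}$), and replace the restriction of $g_{i-1}$ to each subinterval by a path with the same endpoints lying in a slightly larger ball and missing $F_i$ — which exists by the path-component statement above; concatenating yields $g_i$, with the diameters chosen small enough that $d(g_{i-1},g_i)$ is summable and that $g_i$ still misses $F_1\cup\dots\cup F_{i-1}$. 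Since $C(\I,X)$ is complete, $(g_i)$ converges to a path $f$ with $f(0)\in P$, $f(1)\in Q$ and $f(\I)\cap F_i=\varnothing$ for all $i$, so $f(\I)\subset X\setminus M$. Finally $f(\I)$, being a continuous image of $\I$, is a Peano continuum, hence arcwise connected, and therefore contains an arc joining $f(0)\in P$ to $f(1)\in Q$ inside $X\setminus M$. The last assertion of the theorem is the special case in which $X$ is replaced by the given arcwise connected open set $U$ — a $G_\delta$ in $X$, hence completely metrizable, and still enjoying $H(n-1,G)$ at the points of $M$ by excision — and $P,Q$ by the interiors of the two prescribed sets.
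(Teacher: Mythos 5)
Your overall strategy coincides with the paper's: write $M=\bigcup F_i$, use the hypothesis $trt\le n-2$ together with the pointwise vanishing $H_k(X,X\setminus x;G)=0$, $k\le n-1$, to conclude that each $F_i$ is a $G$-homological $Z_1$-set, upgrade this to the statement that paths $\mathbb I\to X$ can be pushed off $F_i$, and finish by a completeness argument in $C(\mathbb I,X)$ followed by the fact that a Peano continuum is arcwise connected. The difference is that the paper obtains both hard steps as citations: your ``key lemma'' is precisely Theorem 4.3 of Banakh--Cauty--Karassev (the $trt$-version of Daverman's Lemma 2.1), and the passage from ``$G$-homological $Z_1$-set'' to ``$Z_1$-set in the Toru\'nczyk sense'' is their Theorem 3.2(6), after which the paper just invokes Baire category in $C(\mathbb I,X)$. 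You attempt to prove both from scratch. For the key lemma your induction-on-$trt$/Mayer--Vietoris sketch is only a gesture at the actual argument --- you say yourself that the bookkeeping ``is where the real difficulty lies'' and then omit it --- so as a self-contained proof this is the principal gap; as a citation it is exactly the right statement to quote.

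Your rerouting argument (replacing the citation of Theorem 3.2(6) plus Baire category) is a reasonable direct substitute, but one step is wrong as stated: you cannot in general subdivide $\mathbb I$ so that the subdivision points are mapped off $F_i$, even ``after a harmless preliminary adjustment,'' because $g_{i-1}$ may carry an entire subinterval into $F_i$ and a complete metric space need not be locally path-connected there, so there is no obvious small adjustment of the map at such a point. The repair uses exactly the $\pi_0$-bijection you already derived: for each subdivision point $t_j$ choose, by surjectivity of $\pi_0(U_j\setminus F_i)\to\pi_0(U_j)$ for a small ball $U_j$ around $g_{i-1}(t_j)$, an anchor $a_j\in U_j\setminus F_i$ in the path component of $g_{i-1}(t_j)$, and then join $a_{j-1}$ to $a_j$ inside (a slightly larger ball)$\setminus F_i$ using injectivity of the $\pi_0$-map; this also handles the endpoints in $P$ and $Q$. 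Two smaller points: summability of $d(g_{i-1},g_i)$ alone does not guarantee that the limit misses each $F_j$ --- you must dominate the tail sums by $d\bigl(g_i(\mathbb I),F_1\cup\dots\cup F_i\bigr)$ at each stage; and your deduction that $F_i$ has empty interior from $H_0(U,U\setminus F_i;G)=0$ is sensitive to the reduced-homology convention the paper adopts in dimension zero (it is cleaner to read nowhere-density off the $\pi_0$-surjectivity itself). All of these are fixable with the tools you have in hand; the one substantive omission is the proof of the key lemma.
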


\begin{proof}
Let $M$ be a countable union of closed sets $M_i$, $i=1,2,..$. Since each $x\in M$ is a $G$-homological $Z_{n-1}$-point in $X$ and $trt(M_i)\leq n-2$, we can apply Theorem 4.3 from \cite{bck} stating that if $A$ is a closed subset of a space $X$ such that $trt(A)=m$ and all $a\in A$ are $G$-homological $Z_{n+m}$-points in $X$, then $A$ is a $G$-homological $Z_{n}$-set in $X$ (for the covering dimension $\dim$ this was established by Daverman \cite[Lemma 2.1]{da}). Therefore, any one of the sets $M_i$ is a $G$-homological $Z_1$-set in $X$. Then, by \cite[Theorem 3.2(6)]{bck}, $M_i$ are $Z_1$-sets in $X$. Consequently, the spaces $C_i=C(\mathbb I,X\setminus M_i)$ are dense (and obviously, open) in the space $C(\mathbb I,X)$ of all continuous maps from $\mathbb I$ into $X$ equipped with the compact-open topology. Finally, since $C(\mathbb I,X)$
is complete, $\bigcap C_i$ is also dense in $C(\mathbb I,X)$. Because $P$ and $Q$ can be joined by an arc in $X$, there exists a map $f\colon\mathbb I\to X$ with $f(0)\in P$ and $f(1)\in Q$. Since $P$ and $Q$ are open, $f$ can be approximated by maps $g\colon\mathbb I\to X\setminus M$ with $g(0)\in P$ and $g(1)\in Q$.

The second half follows from the first one and the following observations: if $U\subset X$ is open, then the excision axiom implies $H_k(U,U\setminus x;G)=0$ for all $k\leq n-1$ and $x\in U$; moreover, $trt(M)\leq\dim M$ for any set $M\subset X$.
\end{proof}

Below, by a {\em homology $n$-manifold over $G$} we mean a metric space $X$ such that for every $x\in X$ we have $H_k(X,X\setminus x;G)=0$ if $k\neq n$ and $H_n(X,X\setminus x;G)=G$.

\begin{cor}
Let $X$ be an arcwise connected complete metric space. In each of the following cases any arcwise connected open subset of $X$ is a Mazurkiewicz arc $n$-manifold:
\begin{itemize}
\item[(1)] $X$ is a homology $n$-manifold over a group $G$;
\item[(2)] $X$ is a product of at least $n$  metric spaces $X_i$, $1\leq i\leq m$.
\end{itemize}
\end{cor}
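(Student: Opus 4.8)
The plan is to reduce everything to Theorem 4.1. Its concluding assertion is that a complete metric space $X$ with the property $H(n-1,G)$ has the feature that every arcwise connected open subset of it is a Mazurkiewicz arc $n$-manifold. So it will suffice to check, in each of the two cases, that $X$ has the property $H(n-1,\cdot)$, i.e. that $H_k(X,X\setminus x;\cdot)=0$ for every $x\in X$ and every $k\le n-1$; after that we simply invoke Theorem 4.1.

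Case (1) is immediate and needs no work: by the very definition of a homology $n$-manifold over $G$ one has $H_k(X,X\setminus x;G)=0$ for every $k\neq n$, in particular for all $k\le n-1$, so $X$ has the property $H(n-1,G)$.

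For case (2) I would argue as follows. Write $X=X_1\times\cdots\times X_m$ with $m\ge n$, and assume, as we may, that no factor $X_i$ is a one-point space. Each coordinate projection $X\to X_i$ is a continuous surjection, so each $X_i$ is arcwise connected; in particular $X_i$ is path connected while $X_i\setminus\{x_i\}$ is non-empty, hence the inclusion induces a surjection $H_0(X_i\setminus\{x_i\};\mathbb Z)\to H_0(X_i;\mathbb Z)$ and therefore $H_0(X_i,X_i\setminus\{x_i\};\mathbb Z)=0$ for every $x_i\in X_i$. Equivalently, the relative homology of the pair $(X_i,X_i\setminus\{x_i\})$ is concentrated in degrees $\ge 1$. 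Now fix $x=(x_1,\dots,x_m)$. From $X\setminus\{x\}=\bigcup_{i=1}^m X_1\times\cdots\times(X_i\setminus\{x_i\})\times\cdots\times X_m$ we obtain an identification of pairs $(X,X\setminus\{x\})=\prod_{i=1}^m (X_i,X_i\setminus\{x_i\})$; moreover every $X_i\setminus\{x_i\}$ is open in $X_i$, so at each stage of forming this iterated product the two subspaces whose union one is taking are open in the ambient total space, the classical relative Künneth theorem is available, and an induction on $m$ shows $H_k(X,X\setminus\{x\};\mathbb Z)=0$ for all $k<m$ — a nonzero tensor or Tor summand of the product in total degree $k$ would require each of the $m$ factors to contribute a class in degree $\ge 1$, forcing $k\ge m$. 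Since $m\ge n$ this yields $H_k(X,X\setminus\{x\};\mathbb Z)=0$ for $k\le n-1$, so $X$ has the property $H(n-1,\mathbb Z)$ and Theorem 4.1 finishes the proof.

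I do not expect a serious obstacle here. The only place where the hypotheses are genuinely used is the vanishing $H_0(X_i,X_i\setminus\{x_i\})=0$, which rests exactly on the arcwise connectedness of $X$ (passed to the factors) together with the factors being non-degenerate; the remainder — verifying the openness/excision conditions needed for the relative Künneth formula and running the degree count in the induction — is routine bookkeeping, and the two hypotheses of the corollary have been arranged precisely so that the $H(n-1)$-property of $X$ comes for free.
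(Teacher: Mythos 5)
Your proof is correct, and for the most part it coincides with the paper's: both arguments reduce the corollary to Theorem 4.1, dispose of case (1) by noting that a homology $n$-manifold over $G$ has $H_k(X,X\setminus x;G)=0$ for all $k\neq n$, and begin case (2) by observing that arcwise connectedness passes to the factors, so the tail of the long exact sequence of the pair gives $H_0(X_i,X_i\setminus x_i;G)=0$. Where you genuinely diverge is in promoting this degree-zero vanishing on the factors to the vanishing $H_k(X,X\setminus x)=0$ for $k\le m-1$: the paper cites the Banakh--Cauty--Karassev theory of $G$-homological $Z_n$-sets (each $x_i$ is a homological $Z_0$-point of $X_i$, and their product theorem makes $x$ a homological $Z_{m-1}$-point of $X$), whereas you compute directly with the relative K\"unneth theorem applied to $(X,X\setminus x)=\prod_i(X_i,X_i\setminus x_i)$, using openness of the subspaces to guarantee excisiveness and the observation that every tensor or Tor summand in total degree $k$ forces $k\ge m$. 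Your route is self-contained and elementary (it needs only classical Eilenberg--Zilber machinery over $\mathbb Z$, which suffices since Theorem 4.1 only asks for the property $H(n-1,G)$ for some group), at the cost of writing out the induction; the paper's is a one-line citation and matches the homological $Z_n$-set formalism it uses throughout Section 4. One caveat applying to both arguments: your phrase ``assume, as we may, that no factor is a one-point space'' is not quite free, since discarding singleton factors lowers $m$; the corollary tacitly assumes the $m\ge n$ factors are nondegenerate (otherwise $\mathbb R\times\{\mathrm{pt}\}^{\,n-1}$ would be a counterexample), and it is worth saying so explicitly.
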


\begin{proof}
The first item follows directly from Theorem 4.1. To prove the second one, consider the exact sequence for every $i$ and $x\in X_i$
\[
\begin{diagram}
H_0(X_i\setminus x;G)\rightarrow H_0(X_i;G)\rightarrow H_0(X_i,X_i\setminus x;G)\rightarrow 0.
\end{diagram}
\] Since $X_i$ is arcwise connected (as an image of $X$), $H_0(X_i;G)=0$. So, $H_0(X_i,X_i\setminus x;G)=0$.
Then, by \cite[Theorem 3.2 and Theorem 6.1(2)]{bck}, $H_k(X,X\setminus x;G)=0$ for every
$x\in X$ and $k\leq m-1$. Finally, Theorem 4.1 completes the proof.
\end{proof}

In some situations the space $X\setminus M$ from Theorem 4.1 is arcwise connected.

\begin{thm}
Let $X\in H(n-1,\mathbb Z)$ be a connected and locally connected complete metric space and $M$ be an $F_\sigma$-set in $X$ with $\dim M\leq n-2$. Then $U\setminus M$ is arcwise connected for any open connected set $U\subset X$ with $U\setminus M\neq\varnothing$.
\end{thm}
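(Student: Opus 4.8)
The plan is to upgrade the path-connectedness conclusion of Theorem 4.1 to arc-connectedness by exploiting local connectedness together with completeness, which places us in the realm of Peano-type continua where paths can be promoted to arcs. First I would fix two points $a,b\in U\setminus M$ that I want to join by an arc avoiding $M$. Since $U$ is connected and locally connected, the standard fact (see Kuratowski, \cite{kur}) that a connected, locally connected, complete (hence locally compact in the relevant local picture, or at least completely metrizable) metric space is arcwise connected gives a path, and in fact an arc, from $a$ to $b$ inside $U$. The subtlety is that this initial arc may hit $M$, so I cannot simply quote arc-connectedness of $U$ and be done.

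The key step is to run the Baire-category argument of Theorem 4.1 inside $C(\mathbb I, U)$ rather than $C(\mathbb I,X)$, using that $U$ is an open subset of a complete metric space and is itself connected and locally connected. As in the proof of Theorem 4.1, writing $M\cap U=\bigcup_i M_i$ with each $M_i$ closed in $U$, the hypothesis $\dim M\leq n-2$ gives $trt(M_i)\leq n-2$, and since $X\in H(n-1,\mathbb Z)$, the excision axiom yields that every $x\in U$ is a $\mathbb Z$-homological $Z_{n-1}$-point in $U$; then Theorem 4.3 of \cite{bck} makes each $M_i$ a $\mathbb Z$-homological $Z_1$-set in $U$, hence (by Theorem 3.2(6) of \cite{bck}) a genuine $Z_1$-set in $U$. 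Therefore each $C_i=C(\mathbb I, U\setminus M_i)$ is dense open in $C(\mathbb I,U)$, and $C(\mathbb I,U)$ is completely metrizable (as $U$ is a $G_\delta$, indeed open, in the complete space $X$), so $\bigcap_i C_i$ is dense. Now take any path $\sigma\colon\mathbb I\to U$ with $\sigma(0)=a$, $\sigma(1)=b$ — such a path exists because $U$ is connected, locally connected and completely metrizable — and approximate it closely by a map $g\in\bigcap_i C_i$, i.e.\ $g(\mathbb I)\subset U\setminus M$, with endpoints still equal to (or, perturbing slightly within small connected neighborhoods, as close as we like to) $a$ and $b$; a small correction at the two ends, using local connectedness of $U\setminus M$ near $a$ and $b$ (which holds since $M_i$ are $Z_1$-sets, so $U\setminus M_i$ is locally connected wherever $U$ is), lets us arrange $g(0)=a$ and $g(1)=b$ exactly.

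The final step is to pass from the path $g$ in $U\setminus M$ to an honest arc. Here I would invoke that $g(\mathbb I)$ is a Peano continuum contained in $U\setminus M$; but $U\setminus M$ itself need not be locally connected globally, so rather than working in $U\setminus M$ directly I would use the classical theorem that any two points of a path-connected metric space can be joined by an arc lying in the image of the original path together with, if necessary, an enlargement within an arbitrarily small neighborhood — concretely, the image $g(\mathbb I)$ is a locally connected continuum (continuous image of $\mathbb I$), hence arcwise connected, and contains an arc from $a$ to $b$; that arc lies in $g(\mathbb I)\subset U\setminus M$. Thus $U\setminus M$ is arcwise connected.

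The main obstacle I anticipate is the endpoint correction and, more delicately, ensuring that the ambient space in which we apply the Baire argument and the $Z_1$-set machinery is legitimately $U$ (an open, completely metrizable subspace) rather than $X$: one must check that the $G$-homological $Z_{n-1}$-point property and Theorem 4.3 of \cite{bck} are local and so transfer from $X$ to $U$ via excision, and that the restriction $C(\mathbb I,U\setminus M_i)\subset C(\mathbb I,U)$ is still dense — this uses that $M_i$ is a $Z_1$-set in $U$, for which we need $M_i$ to be \emph{closed in $U$}, which is why $M$ being an $F_\sigma$ in $X$ (so $M\cap U$ is an $F_\sigma$ in $U$) is exactly the right hypothesis. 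Once those local reductions are in place, the Baire-category and Peano-continuum arguments are routine.
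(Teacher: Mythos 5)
Your overall strategy (reduce each $M_i\cap U$ to a $Z_1$-set in $U$ via \cite{bck}, then run a Baire-category argument in a path space, and finally extract an arc from the image of the resulting path, which is a Peano continuum) is sound, and its first half is exactly how the paper proves Theorem 4.1. But the one genuinely new difficulty in this theorem, compared with Theorem 4.1, is pinning the endpoints of the path to the \emph{specified} points $a,b\in U\setminus M$ rather than merely landing in prescribed open sets, and this is where your argument has a real gap. After you approximate $\sigma$ by $g\in\bigcap_i C_i$, you propose to correct the endpoints ``using local connectedness of $U\setminus M$ near $a$ and $b$,'' justified by the observation that each $U\setminus M_i$ is locally connected. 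That inference is not valid: local (arcwise) connectedness is not preserved under countable intersections of open dense subsets, and what you actually need --- that $a$ and the nearby point $g(0)$ lie in the same arc-component of $V\setminus M$ for arbitrarily small neighborhoods $V$ of $a$ --- is precisely a local instance of the statement you are trying to prove. As written, the endpoint correction is circular.

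The paper sidesteps this entirely by a different device: it applies the Gutev--Valov selection theorem \cite[Theorem 1.2]{gv} to the lower semi-continuous set-valued map $\Phi\colon\mathbb I\to U$ with $\Phi(0)=\{a\}$, $\Phi(1)=\{b\}$ and $\Phi(t)=U$ for $t\in(0,1)$, obtaining in one step a continuous selection $g\colon\mathbb I\to U\setminus M$ that already has the correct endpoints; the $Z_1$-set property of the sets $M_i\cap U$ is what makes the selection theorem applicable. If you want to keep your Baire-category route, the standard repair is to work in the closed (hence completely metrizable) subspace $C_{a,b}(\mathbb I,U)$ of paths with $f(0)=a$, $f(1)=b$, and to verify that each $C_{a,b}(\mathbb I,U\setminus M_i)$ is open and dense there; density for a \emph{single} closed $Z_1$-set $M_i$ can be checked by hand, using that $a,b\notin M_i$, that $M_i$ is closed (so small balls about $a,b$ miss $M_i$), and that $U$ is locally arcwise connected. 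That verification is missing from your proposal and cannot be replaced by the appeal to local connectedness of $U\setminus M$. The final step --- that the image of the resulting path is a locally connected continuum and therefore contains an arc from $a$ to $b$ inside $U\setminus M$ --- is fine.
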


\begin{proof}
Since $X$ is connected and locally connected, $U$ is arcwise connected  and locally arcwise connected. Moreover, by the excision axiom, $U$ has the property $H(n-1,\mathbb Z)$.
Let $M=\bigcup M_i$ with each $M_i$ being closed in $X$. It follows from the proof of Theorem 4.1 that each
$M_i\cap U$ is a $Z_1$-set in $U$. For any two points $a,b\in U\setminus M$ consider the set-valued map $\Phi\colon\mathbb I\to U$ defined by
$\Phi(0)=a$, $\Phi(1)=b$ and $\Phi(t)=U$ for each $t\in (0,1)$. It is easily seen that $\Phi$ is lower semi-continuous and for every $x\in U$ and its neighborhood $W$ there is a neighborhood $V\subset W$ of $x$ with the following property: if $t\in [0,1]$ and $x_1,x_2$ are two points from $\Phi(t)\cap V$ there is an arc in $\Phi(t)\cap W$ joining $x_1$ and $x_2$. Then, by \cite[Theorem 1.2]{gv}, $\Phi$ admits a continuous selection
$g\colon\mathbb I\to U\setminus M$. Obviously, $g(\mathbb I)$ is an arc in $U\setminus M$ joining $a$ and $b$.
\end{proof}

Next proposition shows that any space with a base consisting of Cantor manifolds is a Mazurkiewicz manifold.

\begin{pro}
Suppose $X$ is a connected complete metric space possessing a base $\mathcal B$ of open sets such that $\overline{U}$ is a Cantor manifold with respect to a given class $\mathcal C$ for every $U\in\mathcal B$. Then $X\setminus M$ is arcwise connected for every $F_\sigma$-subset $M=\bigcup M_i$ of $X$ with  $M_i\in\mathcal C$, $i\geq 1$.
\end{pro}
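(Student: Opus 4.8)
The plan is to run the same Baire‑category scheme that underlies the proofs of Theorems 4.1 and 4.3, but to supply the "rerouting" ingredient — which there came from a homological/$Z_1$‑set argument — from the Cantor‑manifold hypothesis instead. As is customary for such classes I assume $\mathcal C$ is closed‑hereditary and that no Cantor $\mathcal C$‑manifold belongs to $\mathcal C$; in particular every $M_i$ is nowhere dense in $X$ (if a basic $U$ were contained in $M_i$, then $\overline U\in\mathcal C$, impossible since $\overline U$ is a Cantor $\mathcal C$‑manifold), so each $X\setminus M_i$, and hence $X\setminus M$, is dense in $X$.

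First I would record two facts about $X$. Since $X$ is metric, hence regular, and $\mathcal B$ is a base, every point of $X$ has a neighbourhood base consisting of sets $\overline U$, $U\in\mathcal B$, and each such $\overline U$ is connected (a Cantor $\mathcal C$‑manifold is connected, because $\varnothing\in\mathcal C$); thus $X$ is locally connected. Being connected, locally connected and complete metric, $X$ is then arcwise connected and locally arcwise connected — the same argument used for the open set $U$ in the proof of Theorem 4.3.

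The crux is the local non‑cutting statement: \emph{for every $U\in\mathcal B$ and every $i$ the set $U\setminus M_i$ is connected}. Suppose not. As $U$ is open in the locally connected space $X$, I may pass to a connected component $W$ of $U$ with $W\setminus M_i=A\sqcup B$, where $A,B$ are nonempty and relatively clopen. Then $\bd_W A$ is a nonempty partition of $W$ contained in $M_i$, and, using regularity to fit a basic set $V$ with $\overline V\subset W$ through a suitable point of $\bd_W A$ (one all of whose neighbourhoods meet both $A$ and $W\setminus\overline A$), one gets a basic $V\in\mathcal B$ for which $\overline V\cap\bd_W A$ is a partition of $\overline V$. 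This partition is a closed subset of $M_i$, hence lies in $\mathcal C$, contradicting that $\overline V$ is a Cantor $\mathcal C$‑manifold. So $U\setminus M_i$ is connected; being open in $X$ it is locally connected, and being a $G_\delta$ in the complete space $X$ it is completely metrizable, hence arcwise connected. I expect this localization — converting a disconnection of $U\setminus M_i$ into an honest partition of some basic closure lying inside $M_i$ — to be the main technical obstacle, as it requires careful bookkeeping of closures and boundaries.

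Finally, fix $a,b\in X\setminus M$ and let $\mathcal A=\{f\in C(\mathbb I,X):f(0)=a,\ f(1)=b\}$, a closed, hence completely metrizable, subset of the complete space $C(\mathbb I,X)$; it is nonempty since $X$ is arcwise connected. For each $i$ the set $D_i=\{f\in C(\mathbb I,X):f(\mathbb I)\cap M_i=\varnothing\}$ is open ($M_i$ closed, $\mathbb I$ compact), and $D_i\cap\mathcal A$ is dense in $\mathcal A$: given $f\in\mathcal A$ and $\varepsilon>0$, subdivide $\mathbb I$ as $0=t_0<\dots<t_m=1$ and choose $U_1,\dots,U_m\in\mathcal B$ of diameter $<\varepsilon$ with $f([t_{k-1},t_k])\subset U_k$, put $y_0=a$, $y_m=b$, pick $y_k\in(U_k\cap U_{k+1})\setminus M_i$ for $0<k<m$, and join $y_{k-1}$ to $y_k$ by a path inside the arcwise connected set $U_k\setminus M_i$; the concatenation lies in $\mathcal A\cap D_i$ and is $\varepsilon$‑close to $f$. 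By the Baire category theorem applied in $\mathcal A$, $\bigcap_i(D_i\cap\mathcal A)$ is dense, in particular nonempty, so there is a path $g\colon\mathbb I\to X\setminus M$ with $g(0)=a$ and $g(1)=b$; the Peano continuum $g(\mathbb I)\subset X\setminus M$ then contains an arc joining $a$ and $b$, whence $X\setminus M$ is arcwise connected.
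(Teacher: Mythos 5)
Your argument is correct in substance but takes a genuinely different route from the paper. The paper's proof is two lines: it invokes the Had\v{z}iivanov--Hamamd\v{z}iev theorem (``Urysohn's method'' for showing that an $F_\sigma$-union of non-cutting sets does not cut) to conclude at once that $X\setminus M$ and all $U\setminus M$, $U\in\mathcal B$, are connected, and then applies the Mazurkiewicz--Moore theorem to the connected, locally connected, completely metrizable space $X\setminus M$. You instead re-prove the needed non-cutting statement only for a \emph{single} closed $M_i$ --- your localization of a disconnection of $W\setminus M_i$ to a partition of some $\overline V$, $V\in\mathcal B$, lying in $M_i$ is exactly right and is the real content --- and then handle the countable union by a Baire-category argument in the path space $C(\mathbb I,X)$, in the spirit of the paper's Theorem 4.1. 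This buys self-containedness (no appeal to \cite{hh}) at the cost of length; both routes need the implicit standing hypotheses you flag ($\mathcal C$ closed-hereditary, $\varnothing\in\mathcal C$, no Cantor $\mathcal C$-manifold in $\mathcal C$), and both produce a path whose image is a Peano continuum in $X\setminus M$, hence contains an arc.

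One point needs repair, though it is routine. A basic set $U\in\mathcal B$ need not be connected (only $\overline U$ is a Cantor $\mathcal C$-manifold, and a dense open subset of a continuum can be disconnected), so your stated claim ``$U\setminus M_i$ is connected for every $U\in\mathcal B$'' is false as written --- already for $M_i=\varnothing$. What your boundary argument actually proves, and what you need, is that $W\setminus M_i$ is connected (hence arcwise connected) for every connected open $W\subset X$, since regularity always supplies $V\in\mathcal B$ with $x\in V\subset\overline V\subset W$. Correspondingly, in the density argument for $D_i$ you cannot simply join $y_{k-1}$ to $y_k$ ``inside $U_k\setminus M_i$'': you must choose $y_k$ in a small connected neighbourhood of $f(t_k)$ inside $U_k\cap U_{k+1}$ (local connectedness of $X$ provides one), so that $y_{k-1}$ and $y_k$ lie in the same component of $U_k$ as $f([t_{k-1},t_k])$, and then join them within that component minus $M_i$. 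With this adjustment the proof goes through.
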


\begin{proof}
 According to a result of Had\v{z}iivanov-Hamamd\v{z}iev \cite[Theorem 1]{hh}, $X\setminus M$ is connected. Moreover, by the same result, all $U\setminus M$, $U\in\mathcal B$, are connected. Hence, $X\setminus M$ is connected and locally connected. Because $X\setminus M$ is complete (as a $G_\delta$-subset of $X$), it is arcwise connected.
\end{proof}

A metric space $X$ is said to have the {\em local separation property in dimension $n$} (written $LS^n$) if for every $x\in X$ and every neighborhood $U$ of $x$ there exists another neighborhood $V\subset U$ of $x$ such that any map
$f\colon\mathbb S^k\to V$, $k\leq n$, can be approximated by maps $g\colon\mathbb S^n\to V$ such that each $g(\mathbb S^k)$ does not separate $V$. It can be shown that if a space has the property $LS^n$, then the statement in above definition holds with $\mathbb S^k$
replaced by $\mathbb I^k$.

The {\em disjoint $(n,m)$-cell property} of a metric space $X$, denoted by $D(n,m)$, is defined as follows: for each $\epsilon>0$ and any two maps
$f\colon\mathbb I^n\to X$ and $g\colon\mathbb I^m\to X$ there exist maps $f'\colon\mathbb I^n\to X$ and $g'\colon\mathbb I^m\to X$ such that $f'$ and $g'$
are $\epsilon$-close to $f$ and $g$, respectively, and $f'(\mathbb I^n)\cap g'(\mathbb I^m)=\varnothing$. We are interested in the property $D(0,n)$, which implies the property $H(n,G)$ for any $G$.
Using an idea from Krupski \cite[Theorem 2.6]{kru93} we provide some conditions for a homogeneous spaces to have the property $H(n-1,G)$.

\begin{thm}
Any homogeneous locally compact metric $ANR$-space $X$ with $X\in LS^{n-2}$ has the $D(0,n-1)$-property. Thus $X$ has the $H(n-1,G)$-property for any group $G$.
\end{thm}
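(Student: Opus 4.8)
The plan is to reduce the statement to a local homological computation using the homogeneity of $X$ together with the $LS^{n-2}$ property. First I would fix a point $x\in X$ and, using local compactness and the ANR property, choose a compact connected locally connected neighborhood base around $x$; by homogeneity it suffices to prove $H_k(X,X\setminus x;G)=0$ for $k\leq n-1$ for this single $x$. The key mechanism, following Krupski's Theorem 2.6 in \cite{kru93}, is to express the local homology group $H_k(X,X\setminus x;G)$ as a direct limit $\varinjlim H_k(U,U\setminus x;G)$ over neighborhoods $U$ of $x$, and to kill each class in the limit by pushing it into a smaller neighborhood $V\subset U$ supplied by the $LS^{n-2}$ property. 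Concretely, a class in $H_k(U,U\setminus x;G)$ is represented by a relative cycle, i.e.\ (after subdivision) a singular $k$-chain $c$ in $U$ with $\partial c$ supported in $U\setminus x$; since singular homology is carried on compact sets and images of $\mathbb I^k$, one realizes the relevant part of $c$ as (the image of) a map $f\colon \mathbb I^k\to V$, and the remark following the definition of $LS^n$ lets us assume the approximating maps have domain $\mathbb I^k$ rather than $\mathbb S^k$.

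Next I would carry out the reduction step. Given a class $\alpha\in H_k(X,X\setminus x;G)$ with $k\leq n-1$, represent it in some $H_k(U,U\setminus x;G)$; choose $V\subset U$ as in $LS^{n-2}$. For $k\leq n-2$ the $LS^{n-2}$ property directly gives an approximation $g\colon \mathbb I^k\to V$ whose image does not separate $V$, and "does not separate $V$" is exactly the hypothesis needed to conclude that the corresponding relative class in $H_k(V,V\setminus g(\mathbb I^k);G)$, and hence its image in $H_k(V,V\setminus x;G)$ (using that $x\in g(\mathbb I^k)$ can be arranged, or that $V\setminus g(\mathbb I^k)$ is connected so that the boundary of the chain can be coned off), is trivial; composing with the direct limit map $H_k(V,V\setminus x;G)\to H_k(X,X\setminus x;G)$ annihilates $\alpha$. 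The boundary case $k=n-1$ requires a small extra argument: one uses the exact sequence of the triple or a Mayer--Vietoris/duality-type argument inside $V$ to push the $(n-1)$-dimensional class down to an $(n-2)$-dimensional one, where $LS^{n-2}$ applies; this is where the specific index shift "$LS^{n-2}$ implies $H(n-1,G)$" comes from, mirroring the improvement over Mitchell's theorem \cite{mi} mentioned in the introduction.

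The main obstacle I anticipate is making the phrase "$g(\mathbb S^k)$ (or $g(\mathbb I^k)$) does not separate $V$" do precise homological work: one must connect the purely separation-theoretic conclusion of $LS^{n-2}$ to the vanishing of a singular homology class with coefficients in an arbitrary abelian group $G$. The cleanest route is to observe that $V$, being an open subset of a locally compact ANR, is itself an ANR and in particular locally connected and locally contractible, so that non-separation of $V$ by the compact set $g(\mathbb I^k)$ translates (via the long exact sequence of the pair $(V,V\setminus g(\mathbb I^k))$ and connectedness of $V\setminus g(\mathbb I^k)$) into the vanishing of the relevant reduced $0$-dimensional term, and then an inductive/dimension-raising argument inside the ANR $V$ propagates this to the dimension $k$ term. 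A secondary technical point is the careful handling of the direct limit and of the requirement that the approximations be close enough to keep $x$ in the image (or, alternatively, to keep the modified chain's boundary inside $V\setminus x$); this is routine once the neighborhood base is chosen so that each $U$ deformation-retracts appropriately, but it must be spelled out. With these in hand, homogeneity upgrades the single-point conclusion to the full $H(n-1,G)$-property, completing the proof.
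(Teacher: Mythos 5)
There is a genuine gap, and it sits exactly where you flag your ``main obstacle.'' Your plan is to work directly with singular chains and to argue that ``$g(\mathbb I^k)$ does not separate $V$'' forces the vanishing of a relative class in $H_k(V,V\setminus x;G)$. But non-separation is only a statement about $\tilde H_0$ of the complement $V\setminus g(\mathbb I^k)$; it gives you no control over $k$-dimensional homology for $k\geq 1$, and the ``inductive/dimension-raising argument inside the ANR $V$'' that you invoke to bridge this is precisely the missing proof. Likewise, your treatment of the boundary case $k=n-1$ by ``the exact sequence of the triple or a Mayer--Vietoris/duality-type argument'' is not an argument: the index shift from $LS^{n-2}$ to $H(n-1,G)$ has to come from somewhere specific. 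In the paper it comes from Krupski's Proposition 1.8 and Theorem 2.5 in \cite{kru93}, which reduce the $H(n-1,\mathbb Z)$-property (and then $H(n-1,G)$ via universal coefficients) to the homotopy-theoretic statement that every point is $LCC^{n-2}$: every map $f\colon\mathbb S^k\to V\setminus a$, $k\leq n-2$, extends over $\mathbb B^{k+1}$ into $U\setminus a$. That reduction is what converts the homological claim into an extension problem for maps of spheres, which is the only place the sphere-approximation hypothesis $LS^{n-2}$ can actually bite.

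The second, related defect is that your proposal uses homogeneity only to say ``it suffices to treat one point,'' which is vacuous (the same argument would run at every point anyway). In the actual proof homogeneity is load-bearing: after approximating $f$ by $g$ with $g(\mathbb S^k)$ not separating $V$, one extends $g$ to $\bar g\colon\mathbb B^{k+1}\to U$ (using contractibility of $V$ in $U$), picks $b\in U\setminus\bar g(\mathbb B^{k+1})$, joins $a$ to $b$ by an arc $C$ in $U\setminus g(\mathbb S^k)$ --- this is the only work that non-separation does --- and then shows that the set $A$ of points $x\in C$ over whose complement $g$ extends is open and closed in $C$. Closedness requires producing, for $y\in A$ near $x$, a map $h\colon U\to U$ with $h^{-1}(x)=y$ that is $\epsilon$-close to the identity, which is where homogeneity (an Effros-type micro-transitivity statement) enters. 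Without this arc-pushing mechanism your chain-level scheme has no way to move the extension off the point $a$; note also that your parenthetical ``$x\in g(\mathbb I^k)$ can be arranged'' points in the wrong direction --- the goal is to arrange that the extension over the ball \emph{misses} the point, not that the image contains it.
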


\begin{proof}
We are going to prove that $X$ has the $D(0,n-1)$-property. So, by \cite[Theorem 2.5]{kru93}, $X\in H(n-1,\mathbb Z)$ which, according to the Universal Coefficients Formula, implies $X\in H(n-1,G)$ for any group $G$. By Proposition 1.8 and Theorem 2.5 from \cite{kru93}, it suffices to show that every $a\in X$ is $LCC^{n-2}$ in $X$, i.e. for every neighborhood $U$ of $a$ there exists another neighborhood $V\subset U$ of $a$ such that any map
$f\colon\mathbb S^{k}\to V\backslash a$, $k\leq n-2$, can be extended to a map $\bar{f}\colon\mathbb B^{k+1}\to U\backslash a$ (here $\mathbb B^{k+1}$ is the $(k+1)$-dimensional ball). To this end, let $a\in U\subset X$ be an open connected set with a compact closure. Take an open neighborhood $V\subset U$ of $a$ such that $V$ is contractible in $U$. We may also suppose that $V$ satisfies the requirements from the definition of the
$LS^{n-2}$-property.

Let $f\colon\mathbb S^{k}\to V\backslash a$ be a map with $k\leq n-2$ and $g\colon\mathbb S^{k}\to V\backslash a$ be an approximation of $f$ such that $g(\mathbb S^{k})$ does not separate $V$. The proof will be done if $g$ can be extended to a map from $\mathbb B^{k+1}$ into $U\backslash a$. Indeed, $U\backslash a\in ANR$ implies that if $g$ is close enough to $f$, then $f$ and $g$ are homotopic in $U\backslash a$ and $f$ is extendable to a map $\overline{f}\colon\mathbb B^{k+1}\to U\backslash a$ provided $g$ has such an extension. According to the choice of $V$, there exists an extension $\overline{g}\colon\mathbb B^{k+1}\to U$ of $g$, and let $b\in U\backslash\overline{g}(\mathbb B^{k+1})$. Since  $g(\mathbb S^{k})$ does not separate $V$, it does not separate $U$. Then $U\backslash g(\mathbb S^{k})$ is connected and locally connected, so there exists an arc $C$ in   $U\backslash g(\mathbb S^{k})$ joining the points $a$ and $b$. Following an idea from the proof of \cite[Theorem 2.6]{kru93}, consider the set $$A=\{x\in C:\hbox{ }\mbox{there exists a map}\hbox{~} g_x\colon\mathbb B^{k+1}\to U\backslash x\hbox{~}\mbox{extending}\hbox{~}g\}.$$ Obviously, $A$ is open in $C$ and $b\in A$.  We are going to show that $A$ is also closed in $C$. That would imply that $A=C$ and $a\in A$, which will complete the proof.

Suppose $x\in\overline A$, and let $\epsilon<d(x,X\backslash U)/2$ be a positive number, where $d$ is a metric on $X$, such that if a map $g'\colon\mathbb S^{k}\to U\backslash x$ is $\epsilon$-close to $g$, then $g$ and $g'$ are homotopic in $U\backslash x$ (such $\epsilon$ exists because $U\backslash x$ is an $ANR$).
 Proceeding as in the proof of Theorem 2.6 from \cite{kru93} (with $\mathbb S^1$ replaced by $\mathbb S^{k}$), we can find $\delta>0$, a point
 $y\in A$ with $d(x,y)<\delta$ and a map $h\colon U\to U$ such that $h^{-1}(x)=y$ and $h$ is $\epsilon$-close to the identity on $U$. Since the map
 $g_y\colon\mathbb B^{k+1}\to U\backslash y$ (see the definition of $A$) extends $g$, then $h\circ g_y$ maps $\mathbb B^{k+1}$ into $U\backslash x$
and $(h\circ g_y)|\mathbb S^{k}=h\circ g$ is homotopic to $g$ in $U\backslash x$. Hence, by the homotopy extension property, $g$ can be extended to a map from $\mathbb B^{k+1}$ into $U\backslash x$. So, $x\in A$, which means that $A$ is closed.
\end{proof}

Recall the property $\triangle(n)$ of Borsuk \cite{bo}: $X\in\triangle(n)$ if for every $x\in X$ every neighborhood $U$ of $x$ contains a neighborhood $V$ of $x$ such that each compact nonempty set $B\subset V$ of dimension $\dim B\leq n-1$ is contractible in a subset of $U$ of dimension $\leq\dim B+1$. If $U$ in that definition has a compact closure, then $X\in\triangle(n)$ implies that every map $f\colon K\to U$, where $K$ is a compactum of dimension $\dim K\leq n$,  can be approximated  by maps $g\colon K\to U$ such that $\dim g(K)\leq\dim K$. On the other hand, if $X$ is a homogeneous locally compact, locally connected metric space of dimension $\dim\geq n$, then every region in $X$ is a Cantor $n$-manifold, see \cite{kv}.
Moreover, according to \cite[Observation 3.1]{kru93}, $X\times\mathbb R$ has the disjoint disk property provided $X$ is homogeneous locally compact $ANR$ of dimension $\geq 4$ satisfying $\triangle(2)$. Therefore, Theorem 4.5 implies next corollary which improves \cite[Note added in proof]{mi}.

\begin{cor}
Let $X$ be a homogeneous locally compact metric $ANR$-space such that $\dim X=n\geq 4$ and $X\in\triangle(n-2)$.
Then $X$ has the property $H(n-1,\mathbb Z)$ and the product $X\times\mathbb R$ has the disjoint disk property.
\end{cor}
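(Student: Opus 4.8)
The plan is to combine the two independent inputs that the excerpt has already assembled: Theorem~4.5, which turns the hypotheses $X\in\triangle(n-2)$ (together with homogeneity, local compactness, $ANR$) into the property $H(n-1,\mathbb Z)$, and the cited results of Krupski \cite{kru93} on the disjoint disk property for homogeneous locally compact $ANR$s satisfying a low-dimensional $\triangle$-type condition. More precisely, I would first verify that $X$ indeed falls under the scope of Theorem~4.5, i.e.\ that $\triangle(n-2)$ implies the property $LS^{n-2}$ for a homogeneous locally compact metric $ANR$. This is the crux: one must show that if every neighborhood $U$ of a point contains a neighborhood $V$ in which $(n-3)$-dimensional compacta contract inside an $(\le n-1)$-dimensional subset of $U$, then maps $f\colon\mathbb I^k\to V$ with $k\le n-2$ can be pushed off to maps $g\colon\mathbb I^k\to V$ with $\dim g(\mathbb I^k)\le k\le n-2$, and that such low-dimensional images do not separate $V$ (here one uses that $V$ has non-empty interior in an $n$-dimensional homogeneous locally compact locally connected space, so by \cite{kv} every region is a Cantor $n$-manifold, hence cannot be separated by a set of dimension $\le n-2$). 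The remark in the excerpt preceding Theorem~4.5 already records the key consequence of $\triangle(n)$ for compacta of dimension $\le n$, which is exactly the approximation statement needed; combining it with the Cantor-manifold property of regions yields $LS^{n-2}$.

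Once $X\in LS^{n-2}$ is established, Theorem~4.5 applies verbatim and gives $X\in H(n-1,\mathbb Z)$, which is the first assertion of the corollary. For the second assertion, I would invoke the result of \cite{kru93} quoted in the paragraph before the corollary: every homogeneous locally compact $ANR$ satisfying $\triangle(2)$ has the disjoint disk property. Since $n\ge 4$ is implicit (for $\triangle(n-2)$ with $n-2\ge 2$ to be the relevant hypothesis), $\triangle(n-2)$ implies $\triangle(2)$ — the $\triangle$-properties are monotone in the index, as a larger index only relaxes the dimensional constraints on $B$ and on the contracting set — so $X$ itself has the disjoint disk property, and a fortiori so does $X\times\mathbb R$. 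Alternatively, and more robustly if one does not want to trace the monotonicity, one notes that $X\times\mathbb R$ is still a homogeneous locally compact $ANR$, and adding a Euclidean factor makes the disjoint disk property automatic once $X$ has the property $H(n-1,\mathbb Z)$ with $n-1\ge 2$, by the standard general-position argument in the extra coordinate.

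The main obstacle I anticipate is the first step: the passage from $\triangle(n-2)$ to $LS^{n-2}$ is not purely formal, because $\triangle$ controls contractibility of compacta into low-dimensional sets whereas $LS$ asks for a non-separation property of approximating images. The bridge is precisely the two facts recorded in the excerpt — (i) under $\triangle(n)$, maps of $\le n$-dimensional compacta into a region with compact closure can be approximated by maps whose images have dimension $\le n$, and (ii) regions in such an $X$ are Cantor $n$-manifolds, hence are not separated by subsets of dimension $\le n-2$ — so the argument is to approximate $f\colon\mathbb I^k\to V$ ($k\le n-2$) by $g$ with $\dim g(\mathbb I^k)\le k\le n-2$ using (i) applied with index $n-2$, and then conclude via (ii) that $g(\mathbb I^k)$ does not separate $V$. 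Everything else is a citation: Theorem~4.5 for $H(n-1,\mathbb Z)$, and \cite{kru93} (or the product trick) for the disjoint disk property of $X\times\mathbb R$.
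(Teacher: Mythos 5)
Your derivation of the first assertion is exactly the argument the paper intends and leaves implicit: choose $V\subset U$ open, connected and with compact closure, use the approximation fact recorded for $\triangle(n-2)$ to replace $f\colon\mathbb S^k\to V$, $k\le n-2$, by $g$ with $\dim g(\mathbb S^k)\le k$, and then observe that a compact set of dimension $\le n-2$ cannot be a partition of the region $V$, which is a Cantor $n$-manifold by \cite{kv}; hence $X\in LS^{n-2}$ and Theorem~4.5 yields the property $H(n-1,\mathbb Z)$. That half is correct and coincides with the paper's route.

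The second assertion is where your argument has a genuine gap. The claimed monotonicity of the properties $\triangle(m)$ is not formal, and the justification you give for it is backwards in one respect: increasing the index enlarges the class of compacta $B$ that are \emph{required} to contract (a strengthening of the condition) at the same time as it relaxes the dimension bound on the set in which they contract (a weakening). Thus $\triangle(n-2)$ neither implies nor is implied by $\triangle(2)$ when $n>4$; for instance $\triangle(3)$ only guarantees that $1$-dimensional compacta contract inside $4$-dimensional subsets of $U$, whereas $\triangle(2)$ demands $3$-dimensional ones. So you cannot conclude that $X$ itself has the disjoint disk property along these lines. Your fallback --- that DDP of $X\times\mathbb R$ is ``automatic'' from $H(n-1,\mathbb Z)$ by general position in the extra coordinate --- is also not a proof: homological triviality of $H_k(X,X\setminus x)$ does not by itself supply the disjoint arc--disk type condition needed to separate two singular disks after a push in the $\mathbb R$-factor; detecting DDP of a product with $\mathbb R$ is exactly the subject of \cite{da} and requires such an input. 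The paper itself disposes of this half by a bare citation of \cite{kru93}, and the fact that the conclusion is stated for $X\times\mathbb R$ rather than for $X$ indicates that the intended mechanism is a disjoint $(0,2)$-cells (or arc--disk) property of $X$ coming from Krupski's theorem, promoted to DDP of the product --- not ``$X$ has DDP, hence so does $X\times\mathbb R$.'' You should invoke that result directly rather than manufacture $\triangle(2)$ from $\triangle(n-2)$.
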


It is still unknown whether the dimension of a product of two homogeneous $ANR$-compacta satisfies the logarithmic law. The next corollary provides a partial answer of this question.

\begin{cor}
Let $X\in LS^{n-2}$ be a homogeneous locally compact metric $ANR$-space such that $\dim X=n$.
Then $\dim X\times Y=\dim X+\dim Y$ for every compact metric space $Y$.
\end{cor}

\begin{proof}
 According to the proof of Theorem 4.5, $X\in LCC^{n-2}$. Using the terminology of Kodama's paper \cite{ko}, this means that every $x\in X$ is an $HL^{n-2}$-point. On the other hand, $X$ does not have
any $HL^{n-1}$-point. Indeed, otherwise each point of $X$ would be $HL^{n-1}$ by the homogeneity. Then $X$ has $D(0,n)$ by \cite[Proposition 1.8]{kru93} and,
by \cite[Corollary 2.4]{kru93}, $\dim X>n$, a contradiction. Therefore, we can apply
\cite[Corollary 2]{ko} to conclude that $\dim X\times Y=\dim X+\dim Y$ for any compact metric space $Y$.
\end{proof}

\end{document}